\documentclass[a4paper,USenglish,cleveref, autoref, thm-restate]{lipics-v2021}

\pdfoutput=1 
\hideLIPIcs  
\nolinenumbers 


\bibliographystyle{plainurl}

\title{Orienting undirected phylogenetic networks to tree-child networks} 

\titlerunning{Orienting undirected phylogenetic networks to tree-child networks} 

\author{Shunsuke Maeda}{Department of Pure and Applied Mathematics, Graduate School of Fundamental Science and Engineering, Waseda University, Japan}{shunsuke.m0131@moegi.waseda.jp}{https://orcid.org/0009-0009-0616-8836}{}

\author{Yusuke Kaneko}{Department of Applied Mathematics, School of Fundamental Science and Engineering, Waseda University, Japan}{}{}{}

\author{Hideaki Muramatsu}{Department of Applied Mathematics, School of Fundamental Science and Engineering, Waseda University, Japan}{}{}{}

\author{Yukihiro Murakami}{Delft Institute of Applied Mathematics, Delft 
University of Technology, The 
Netherlands}{y.murakami@tudelft.nl}{https://orcid.org/0000-0003-1355-5884}{}

\author{Momoko Hayamizu\footnote{Corresponding author}}{Department of Applied Mathematics, Faculty of Science and Engineering, Waseda University, Japan}{hayamizu@waseda.jp}{https://orcid.org/0000-0001-8825-6331}{This work was supported by JST FOREST Program (Grant Number JPMJFR2135, Japan).}

\authorrunning{S. Maeda, Y. Kaneko, H. Muramatsu, Y. Murakami, and M. Hayamizu}

\Copyright{Shunsuke Maeda, Yusuke Kaneko, Hideaki Muramatsu, Yukihiro Murakami, and Momoko Hayamizu} 

\ccsdesc[100]{Applied computing~Biological networks}

\keywords{Phylogenetic networks, tree-child networks, graph orientation problems} 

\category{} 

\relatedversion{} 




\EventEditors{A\"{i}da Ouangraoua and Djamal Belazzougui}
\EventNoEds{2}
\EventLongTitle{23rd Workshop on Algorithms in Bioinformatics (WABI 2023)}
\EventShortTitle{WABI 2023}
\EventAcronym{WABI}
\EventYear{2023}
\EventDate{September 3-6, 2023}
\EventLocation{Houston, TX, United States}
\EventLogo{}
\SeriesVolume{}
\ArticleNo{}


\definecolor{mypink}{rgb}{0.9, 0.0, 0.4}
\definecolor{mygreen}{rgb}{0.0,0.5,0.0}
\definecolor{mypurple}{rgb}{0.6,0.0,0.6}
\definecolor{myfuchsia}{rgb}{0.9,0.0,0.9}
\definecolor{mybrown}{rgb}{0.7,0.3,0.3}
\definecolor{mygray}{rgb}{0.6,0.6,0.6}


\usepackage{algpseudocode,algorithm} 
\usepackage{amsmath}

\newtheorem{problem}[theorem]{Problem}

\usepackage{here}

\begin{document}

\maketitle

\begin{abstract}
Phylogenetic networks are used to represent the evolutionary history of 
species. They are versatile when compared to traditional phylogenetic trees, as 
they capture more complex evolutionary events such as hybridization 
and horizontal gene transfer. 
Distance-based methods such as the Neighbor-Net algorithm are widely 
used to compute phylogenetic networks from data. However, the output is 
necessarily an undirected graph, 
posing a great challenge to deduce the direction of genetic 
flow in order to infer the true evolutionary history. 
Recently, Huber \emph{et al.} investigated two different computational problems 
relevant to orienting undirected phylogenetic networks into directed ones. In 
this paper, we consider the problem of orienting an undirected binary network 
into a tree-child network. We give some necessary conditions for determining 
the tree-child orientability, such as a tight upper bound on the size of 
tree-child orientable graphs, as well as many interesting examples. In 
addition, we introduce new families of undirected phylogenetic networks, the 
jellyfish graphs and ladder graphs, that are orientable but not tree-child 
orientable. We also prove that any ladder graph can be made tree-child 
orientable by adding extra leaves, and describe a simple algorithm for 
orienting a ladder graph to a tree-child network with the minimum number of 
extra leaves. We pose many open problems as well.
\end{abstract}

\section{Introduction}
Phylogenetics is a field that studies the evolutionary history and 
relationships among species. A widely used representation for these 
relationships is the phylogenetic tree, which shows the branching pattern of 
the evolutionary history. However, due to recombination and reticulate events, 
such as hybridization in plants and horizontal gene transfer in bacteria and 
viruses, the evolutionary relationships among species may not always follow a 
strictly tree-like pattern. In such cases, phylogenetic networks provide a more 
suitable representation~\cite{huson2010phylogenetic}.

The Neighbor-Net algorithm~\cite{bryant2004neighbor} is a popular approach to 
construct phylogenetic networks from distance data. It runs fast and always 
outputs a planar graph, making it easy to visualize biological data. However, 
the networks produced by this method are often complex and difficult to 
interpret, as they are undirected. In order to better understand the flow of 
genetic material among species, it is important to orient the edges of 
phylogenetic networks.

Despite its importance, the phylogenetic network orientation problem has 
received limited attention; conversely, the problem of orienting a phylogenetic 
tree has been investigated 
thoroughly~\cite{kinene2016rooting, tria2017phylogenetic, 
van2018unrooted}. In light of this, Huber~\textit{et al.}~\cite{orienting} 
recently proposed two types of problems for 
orienting undirected phylogenetic networks, along with solutions for each 
problem. The first problem called \textsc{Constrained Orientation} asks one to 
determine if an undirected phylogenetic network can be oriented as a directed 
phylogenetic network, given a distinguished edge (called the root) and all 
in-degrees of each vertex. The idea is to subdivide the distinguished edge by a 
vertex and orient the edges incident to it away from the vertex. Because the 
in-degrees of each vertex are specified, one can then orient the edges of the 
whole network (if certain conditions are satisfied).
Huber~\textit{et al.}\;proved that 
such an orientation 
is unique if it exists and provided a linear-time 
algorithm for computing an orientation from a given network.
The second problem called 
\textsc{$C$-Orientation} asks, given a binary undirected phylogenetic network 
$N$, whether there exists an orientation 
of $N$ such that the 
resulting directed graph becomes a network of a desired class of directed phylogenetic 
networks. Huber \textit{et al.} provided an FPT algorithm for solving the 
\textsc{$C$-Orientation} problem, and also proved that the problem is NP-hard 
in the case when $C$ is the class of so-called tree-based networks. Following 
this, Fischer and Francis~\cite{fischer2020tree} showed that one can 
characterize undirected tree-based networks as those that can be tree-base 
oriented.

In this paper, we discuss the \textsc{$C$-orientation} problem when $C$ is a 
so-called 
tree-child network, \textit{i.e.}, the problem of determining whether an 
undirected binary phylogenetic network can be rooted and oriented to be a 
tree-child network. Tree-child networks are prominent in the literature for its 
algorithmic ease of use (see, for example \cite{semple2016phylogenetic}), and as
they can be biologically motivated as a class of networks in which every 
ancestor passes on genetic material to an extant species via means of vertical 
descent~\cite{Transactions}. Here, we give necessary conditions for 
tree-child orientability in terms of number of edges as well as inter-leaf 
distances. Intuitively, our results imply that networks having too many edges, or 
networks in which leaves are pairwise too far apart cannot be tree-child 
oriented. Furthermore, we give two classes of undirected networks, called the 
jellyfish graph and the ladder graph. We show that jellyfish graphs cannot be 
tree-child oriented; we show that ladder graphs can only be tree-child oriented 
when the number of edges is small. For ladder graphs that are not tree-child 
orientable, we give a sharp lower bound on the number of leaves that must be 
added to make it tree-child orientable (Theorem~\ref{thm:ladder.algm}).




The remainder of this paper is organized as follows. In 
Section~\ref{sec:Prelim}, we set up the basic definitions and notation in the 
field of graph theory and combinatorial phylogenetics. In 
Section~\ref{sec:Known}, we recall pertinent definitions and results regarding 
\textsc{Constrained Orientation} and \textsc{$C$-Orientation}. 
In Section~\ref{sec:necessary.conditions}, we give necessary conditions for the 
tree-child orientation problem. In Section~\ref{sec:Jelly.Ladder}, we give two 
classes of undirected networks, the jellyfish graph and the ladder graph, and 
explore their tree-child orientability. 
In Section~\ref{sec:Discussion}, 
we discuss our results and provide open problems for future research.

\section{Definitions and Notation}\label{sec:Prelim}
\subsection{Graph theory}
In this paper, we will only consider connected, finite, simple graphs, which we now define. 
A \emph{graph} is an ordered pair $(V, E)$ of a set $V$ of \emph{vertices} and a set $E$ of \emph{edges} between vertices. 
Given a graph $G$, its vertex-set and edge-set are denoted by $V(G)$ and $E(G)$, respectively. 
A graph $G$ is said to be \emph{finite} if $V(G)$ and $E(G)$ are finite sets. 
A \emph{directed graph} is a graph where each edge has a direction associated with it, whereas an \emph{undirected graph} is a graph where no edge has a direction. 
An edge of directed graph that is oriented from vertex $u$ to vertex $v$ is denoted by $(u, v)$. 
An edge of undirected graph between vertices $u$ and $v$ is denoted by $(u, v)$ or $(v, u)$. 
A directed or undirected graph is \emph{simple} if it contains neither loop (i.e. edge that starts and ends at the same vertex $u$) nor multiple edges (i.e. different edges $(u, v)$, $(u', v')$ with $u=u'$, $v=v'$). 
Two simple graphs $G_1$ and $G_2$ are \emph{isomorphic} if there exists a bijective function $\phi:V(G_1)\rightarrow V(G_2)$ such that $(u, v)\in E(G_1)$ if and only if $(\phi(u), \phi(v))\in E(G_2)$. An undirected graph $G$ is called \emph{the underlying graph} of a directed graph $\hat{G}$ if the undirected graph obtained by ignoring the direction of each edge of $\hat{G}$ is isomorphic to $G$.

For a vertex $v$ of an undirected graph $G$,  the \emph{degree} of $v$ in $G$, denoted by $deg_G(v)$, is the number of edges incident to $v$. 
For a vertex $v$ of a directed graph $N$, the \emph{in-degree} of $v$in $N$, denoted by $indeg_N(v)$ or $d_N^-(v)$, is the number of edges incoming to $v$. Similarly, the \emph{out-degree} of $v$, denoted by $outdeg_N(v)$ or $d_N^+(v)$, is the number of edges outgoing from $v$. 
A \emph{directed path} is a directed graph $G$ that can be represented by an alternating sequence of vertices and consecutive edges $v_1, (v_1, v_2), v_2, \dots, (v_{k-1}, v_k), v_k$, where all vertices are distinct and we have $(indeg_G(v_1), outdeg_G(v_1))=(0,1)$, $(indeg_G(v_k), outdeg_G(v_k))=(1, 0)$ and $(indeg_G(v_i), outdeg_G(v_i))=(1,1)$ for any vertex $v_i$ other than $v_1, v_k$. 
A \emph{directed cycle} is a graph $C$ that satisfies the above conditions except that $v_1=v_k$ and we have  $indeg_C(v_i)=outdeg_C(v_i)=1$ for any vertex $v_i$.
A directed graph containing no directed cycle is  \emph{acyclic}. 

The concepts of path, cycle, and acyclic graphs are defined similarly for undirected graphs. An undirected graph is \emph{connected} if there exists a path between any pair of vertices, whereas a directed graph $G$ is \emph{(weakly) connected} if the underlying graph of $G$ is connected.
Given two vertices $u, v$ of a connected undirected graph $G$, the \emph{distance} between $u$ and $v$, denoted by $d_G(u, v)$,  is the number of the edges in the shortest path between them.


\subsection{Phylogenetic networks}
Throughout the paper, $X$ denotes a finite set with $|X|\geq 2$ that can be biologically interpreted as a set of present-day species. The set $X$ is often referred to as a ``label set'' because each element of $X$ is used to label a vertex of a graph. Recall that all networks considered here are connected, finite, simple graphs. 

An \emph{undirected binary phylogenetic network $N$ (on $X$)} is a directed acyclic graph such that 
i) $X=\{v\in V(N)\mid deg_N(v)=1\}$ and ii) for any $v\in V(N)\setminus X$, $deg_N(v)=3$.  The vertices in $X$ are called \emph{leaves} of $N$.  
We call an undirected phylogenetic network \emph{non-binary} if it is not necessarily binary, i.e. if the above condition ii) is generalized as follows: for any non-leaf vertex $v$, $deg_N(v) \geq 3$. Note that we have defined undirected phylogenetic networks in such a way that they do not have degree-two vertices, simply by convention. It is possible to allow a finite number of degree-two vertices to exist, because their existence is trivial when  orienting undirected phylogenetic networks (the formal definitions related to orientation will be given in Section~\ref{sec:Known}).

A \emph{directed binary phylogenetic network} $\hat{N}$ on $X$ is a undirected acyclic graph such that i) $X=\{v\in V(N)\mid (indeg_N(v), outdeg_N(v))=(1, 0)\}$;  
ii) there exists a unique vertex $\rho\in V(\hat{N})$ with $(indeg_{\hat{N}}(\rho), outdeg_{\hat{N}}(\rho)=(0, 2)$; 
iii) for any $v\in V(\hat{N})\setminus (X\cup \{\rho\})$, $(indeg_{\hat{N}}(v), outdeg_{\hat{N}}(v))\in \{(1, 2), (2, 1)\}$. 
The vertex $\rho$ is called \emph{the root} of $\hat{N}$, and the vertices in $X$ are called \emph{leaves} of $\hat{N}$. Each vertex of in-degree two is called a \emph{reticulation}, while each non-leaf vertex of in-degree one is called a \emph{tree vertex}. 
We call a directed phylogenetic network \emph{non-binary} if it is not necessarily binary, i.e. if the condition iii) is generalized as follows: for any non-leaf non-root vertex $v$, $indeg_N(v)+ outdeg_N(v) \geq 3$. The same remark regarding the definition of undirected non-binary networks also applies to the definition of directed non-binary networks. 

Given an edge $(u,v)$ of a directed phylogenetic network $\hat{N}$, $u$ is a \emph{parent} of $v$, and $v$ is a \emph{child} of $u$.
A directed binary phylogenetic network $\hat{N}$ is called a \emph{tree-child network} if each non-leaf vertex of $\hat{N}$ is a parent of either a tree vertex or a leaf of $\hat{N}$ \cite{Comparison}. See Figure~\ref{fig:tree-child} for examples. 

\begin{figure}[hbt]
  \centering
  \includegraphics[width=0.8\textwidth]{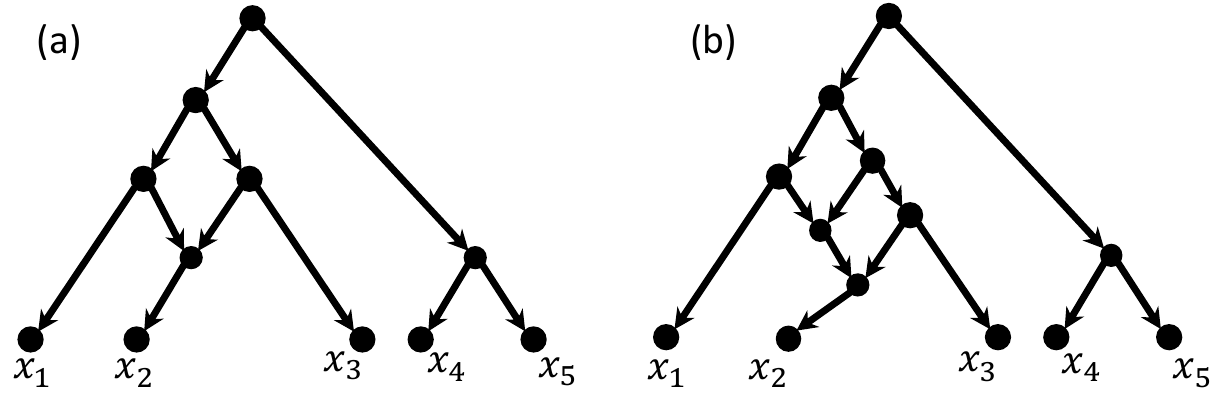}
 \caption{Examples of directed binary phylogenetic networks on $X=\{x_1, x_2, x_3, x_4, x_5\}$. The network shown in (a) is a tree-child network. The network shown in (b) is not a tree-child network. 
  \label{fig:tree-child}}
\end{figure}

\section{Known results on orienting phylogenetic networks}\label{sec:Known}
Orienting undirected phylogenetic networks could find many biological applications in the future, but currently its mathematical and computational aspects are still at an early stage of research. In this section, we briefly recall the necessary definitions and summarize relevant results from Huber~\textit{et al.}~\cite{orienting}, and then describe the problem we will consider in this paper.

\subsection{Terminology: rooting and orienting phylogenetic networks}
In the usual context of graph theory, given an undirected graph $G$, ``orienting'' $G$ typically refers to the operation of creating a directed graph $\hat{G}$ by orienting each edge of $G$, and $\hat{G}$ is called an ``orientation'' of $G$. However, we must stress that, when it comes to an undirected phylogenetic network, the term ``orienting'' may refer to a different operation. More precisely, given a (not necessarily binary) undirected phylogenetic network $N$, Huber~\textit{et al.}~\cite{orienting} defined the operation of \emph{orienting} $N$ as the following procedure (see Figure~\ref{fig:degcut4}(a)(b) and Figure~\ref{fig:TCO}(a)(b) for an illustration): 1) choose a unique edge $e_\rho$ of $N$ called a \emph{root edge}; 2) insert the root $\rho$ into $e_\rho$ (the resulting graph is denoted by $N_\rho$); 3) assign a direction to each edge of $N_\rho$.  
 In other words, orienting an undirected phylogenetic network $N$ means orienting the root-inserted graph $N_\rho$, not of the original graph $N$.

We say that a directed phylogenetic network $\hat{N}$ is an \emph{orientation} of an undirected phylogenetic network $N$ if it is possible to obtain $\hat{N}$ by orienting $N$.  If $N$ is a tree, choosing a root edge $e_\rho$ automatically determines the directions of all edges of $N_\rho$, but in general, rooting $N$ does not specify an orientation of $\hat{N}$. Therefore, to avoid ambiguity, we modify the terminology in Huber~\textit{et al.}~\cite{orienting}. In this paper, we will use the term \emph{rooting} $N$ to mean the process of selecting $e_\rho$ and creating $N_\rho$ from $N$, and \emph{orienting} $N$ to mean the entire process of rooting $N$ and then assigning a direction to each edge of $N_\rho$. Strictly speaking, the rooting step can be seen as yielding a partially directed graph $N_\rho^\prime$, because the root $\rho$ automatically determines the direction of the two edges starting from $\rho$; however, the only difference between $N_\rho$ and $N_\rho^\prime$ is whether the two edges are assigned the obvious directions or not, so in this paper we use the same symbol $N_\rho$ without distinguishing between them.

\subsection{Orientation constrained by the root edge and in-degrees}
The \textsc{Constrained Orientation} is an orientation problem under the constraints of the position of the root edge $e_{\rho}$ and the desired $indeg_{\hat{N}}(v)$ of each vertex (see also Figure~\ref{fig:degcut4}).

\begin{problem}\label{prob:constrained.orientation}
(\textsc{Constrained Orientation})
\begin{description}
\item[\textbf{Input:}]
An undirected non-binary phylogenetic network $N=(V,E)$ on $X$, an edge $e_{\rho}\in E$ into which a unique root $\rho$ is inserted, and a labeling map $\delta_{N}^-:V\rightarrow \mathbb{N}$ that specifies $indeg_{\hat{N}}(v)$ for each vertex $v\in V$.
\item[\textbf{Output:}]
An orientation $\hat{N}$ of $(N,e_{\rho}, \delta_{N}^-)$ if it exists, and NO otherwise.
\end{description}
\end{problem}

Huber~\textit{et al.}~\cite{orienting} introduced the notion of a degree cut, which is the key ingredient for characterizing orientability. The notion of a degree cut is illustrated in Figure~\ref{fig:degcut4}(d). 

\begin{definition}\label{def:degree-cut}
Let $N = (V, E)$ be an undirected non-binary phylogenetic network on $X$ with $e_\rho \in E$ a distinguished edge, and let $N_\rho = (V_\rho, E_\rho, X)$ be the graph obtained from $N$ by subdividing $e_\rho$ by a new vertex $\rho$. Given the desired in-degree $d_{N}^-(v)$ of each vertex $v \in V$, a \emph{degree cut} for $(N, e_\rho, d_{N}^-)$ is an ordered pair $(V', E')$ with $V' \subseteq V$ and $E' \subseteq E_\rho$ such that the following hold in $N_\rho$:
\begin{enumerate}
    \item $E'$ is an edge cut of $N_\rho$; i.e. its removal makes $N_\rho$ into a disconnected graph $N_\rho \setminus E'$ that consists of two or more connected components;
	\item $\rho$ is not in the same connected component of $N_\rho \setminus E'$ as any $v\in V'$;
    \item each edge in $E'$ is incident to exactly one element of $V'$; and
    \item each vertex $v \in V'$ is incident to at least one and at most $d_{N}^-(v) - 1$ edges in $E'$.
\end{enumerate}
In particular, when $N$ is binary, a degree cut is called a \emph{reticulation cut}. 
\end{definition}


\begin{theorem}[Theorems 1 and 2 in \cite{orienting}] \label{thm:huber1-2}
%
%
Let $N = (V, E, X)$ be an undirected non-binary phylogenetic network, $e_\rho \in E$ be a distinguished edge, and $d_{N}^-(v)$ be the desired in-degree of each vertex $v \in V$, where $1 \leq d_{N}^-(v) \leq d_{N}(v)$. Then, the following statements hold. 
\begin{enumerate}
	\item $N$ has an orientation $\hat{N}$ that satisfy the constraints $(e_\rho, d_{N}^-)$ if and only if $(N, e_\rho, d_{N}^-)$ has no degree cut and $\sum_{v \in V} d_{N}^-(v) = |E| + 1$. (For binary $N$, $\sum_{v \in V} d_{N}^-(v) = |E| + 1$ is equivalent to $|R|=|E|-|V|+1$, where $R$ is the set of vertices $v$ with $d_{N}^-(v)=2$.)
	\item  Algorithm 1 in \cite{orienting} decides whether $\hat{N}$ exists, and finds $\hat{N}$ if it exists both in $O(|E|)$ time. 
	\item If it exists, $\hat{N}$ is the unique orientation of $N$ under the constraints $(e_\rho, d_{N}^-)$. 
\end{enumerate}
\end{theorem}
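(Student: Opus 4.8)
\medskip
\noindent\emph{Proof plan.}
The plan is to split the three statements into an elementary layer --- the counting identity of~(1) and the uniqueness of~(3), which follow from a handshake argument and an Euler-type argument --- and a substantive layer, namely the degree-cut characterisation of~(1), which is also precisely the invariant underlying the linear-time algorithm of~(2). I would attack them in the order (counting) $\to$ (uniqueness) $\to$ (necessity of ``no degree cut'') $\to$ (sufficiency, \textit{i.e.}, correctness of the algorithm), with the last of these being the real work.

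Counting: subdividing $e_\rho$ by $\rho$ produces $N_\rho$ with $|V|+1$ vertices and $|E|+1$ edges, and in any orientation each edge contributes $1$ to exactly one in-degree, so the in-degrees of the vertices of $N_\rho$ sum to $|E|+1$; since the root has in-degree $0$, this forces $\sum_{v\in V}d_N^-(v)=|E|+1$, which is clearly also necessary. For binary $N$, substituting $d_N^-(v)\in\{1,2\}$ and putting $R=\{v:d_N^-(v)=2\}$ rewrites the identity as $|R|=|E|-|V|+1$. Uniqueness~(3): given orientations $\hat N_1,\hat N_2$ for the same $(e_\rho,d_N^-)$, let $D$ be the set of edges they orient oppositely; equality of the in-degree at each vertex forces every vertex to have equal in- and out-degree within $D$ as oriented in $\hat N_1$, so $D$ decomposes into directed closed walks and, if nonempty, contains a directed cycle, contradicting acyclicity of $\hat N_1$; hence $D=\emptyset$.

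Necessity of ``no degree cut'': suppose a valid orientation $\hat N$ and a degree cut $(V',E')$ coexist. Deleting $E'$ leaves a non-root component $K$ meeting $V'$; the orientation induced on $K$ is acyclic, hence has a source $v^\ast$, all of whose $d_N^-(v^\ast)\ge 1$ in-edges lie in $E'$, so $v^\ast$ is incident to at least $d_N^-(v^\ast)$ edges of $E'$. If $v^\ast\in V'$ this contradicts the fourth requirement of a degree cut. To rule out the case where every such source lies outside $V'$, I would sum, over the non-root components $K$, the identity $\#\{\text{edges internal to }K\}=\sum_{v\in K}d_N^-(v)-\#\{\text{edges of }E'\text{ entering }K\}$ against connectivity ($\#$ internal edges $\ge|K|-1$) and the lower bound that each vertex of $V'$ meets at least one edge of $E'$; the upshot is that there are then too few cut-edges to supply every component with a source. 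I expect this bookkeeping --- and checking it is unaffected when $E'$ contains an edge internal to some component --- to be the fussiest point of this direction.

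The main obstacle is sufficiency: assuming no degree cut and the counting identity, one must \emph{construct} a valid orientation, which is exactly what the algorithm of~\cite{orienting} does. The natural procedure orients the two edges at $\rho$ away from $\rho$ and then propagates forced choices --- as soon as a vertex $v$ has $d_N^-(v)$ of its edges oriented inward (or $d_N(v)-d_N^-(v)$ of them oriented outward), the remaining edges at $v$ are forced, possibly triggering further forcing --- handled by a worklist in the style of a topological sort that processes each edge $O(1)$ times, which yields the $O(|E|)$ bound of~(2). The crux is to show the propagation never stalls: a stalled configuration --- a nonempty set of still-unoriented edges whose endpoints are each short of both their inward and their outward quotas --- would, via a Hall-type (augmenting-path) argument, exhibit a degree cut of $(N,e_\rho,d_N^-)$; so the hypotheses force the propagation to finish with every in-degree correct, and acyclicity and uniqueness of the output then follow from the construction together with~(3). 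Reverse-engineering Definition~\ref{def:degree-cut} so that precisely the stalled configurations are degree cuts is where the difficulty lies.
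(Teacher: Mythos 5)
This statement is imported verbatim from Huber \emph{et al.}\ \cite{orienting} (it is labelled ``Theorems 1 and 2 in \cite{orienting}'') and the paper gives no proof of it, so there is nothing in the paper to compare your argument against; I can only assess your sketch on its own terms. The elementary layer is fine: the counting identity and its binary specialisation $|R|=|E|-|V|+1$ are correct, and your uniqueness argument for (3) --- the symmetric difference $D$ of two orientations is in-degree-balanced, hence Eulerian, hence contains a directed cycle if nonempty, contradicting acyclicity --- is complete and standard.

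The substantive parts, however, are not proofs but acknowledged to-do items. For necessity of ``no degree cut,'' your per-component source argument genuinely stalls in the case you flag (a source of a non-root component whose in-edges all come from $V'$-vertices sitting in \emph{other} non-root components), and the compensating ``bookkeeping'' you gesture at is not worked out and is the wrong tool. The clean fix is global rather than component-wise: let $S$ be the set of vertices separated from $\rho$ by $E'$ (so $V'\subseteq S$ by condition 2 of Definition~\ref{def:degree-cut}), and take $u\in S$ minimal in a topological order of $\hat N$. Every in-edge of $u$ has its tail outside $S$, hence crosses between components and lies in $E'$; its tail is outside $S\supseteq V'$, so by condition 3 the $V'$-endpoint must be $u$ itself, and then $u$ meets at least $d_N^-(u)$ edges of $E'$, contradicting condition 4. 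For sufficiency and for statement (2), you correctly identify the crux --- a stalled propagation must yield a degree cut --- but you explicitly defer exactly that lemma (``reverse-engineering Definition~\ref{def:degree-cut} \dots is where the difficulty lies''), and you also leave acyclicity of the constructed orientation to an appeal to (3), which only compares two \emph{valid} (already acyclic) orientations and cannot supply acyclicity of the output. As it stands the proposal establishes the easy half of (1) and all of (3), but not the degree-cut characterisation or the algorithmic claim.
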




\begin{figure}[hbt]
  \includegraphics[width=14cm]{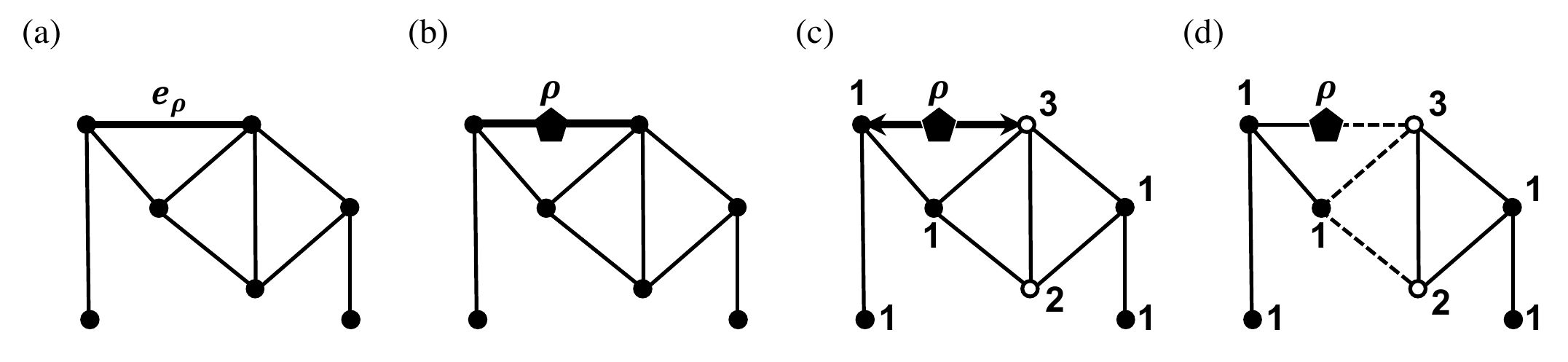}
  \centering
  \caption{An illustration of the Constraint Orientation Problem. (a) An undirected phylogenetic network $N$ with a distinguished root edge $e_\rho$. (b) A phylogenetic network $N_\rho$ obtained by inserting the root vertex $\rho$ into $e_\rho$. (c) A phylogenetic network $N$ with the constraints $(e_\rho, d_N^-)$, where the root edge $e_\rho$ is chosen as in (a), and each vertex $v$ is labeled by the desired in-degree $d_N^-(v)$.  (d) If we let $V'$ be the set of white vertices and $E'$ be the set of dashed-line edges, then $(V', E')$ is a degree cut for $(N, e_\rho, d_N^-)$ (Definition~\ref{def:degree-cut}). Then, the first statement in Theorem~\ref{thm:huber1-2} tells us that there exists no feasible orientation of $N$ under the constraint $(e_\rho, d_N^-)$. 
  \label{fig:degcut4}}
\end{figure}

\subsection{Orientation to a desired class $C$ of networks}
Problem~\ref{prob:constrained.orientation} was the problem of orienting an undirected graph under constraints on the position of the root edge and on the desired in-degree of each vertex. The next problem, called \textsc{$C$-Orientation}, is an orientation problem with different constraints. This problem asks whether a given graph can be oriented to be a directed phylogenetic network belonging to a desired class $C$, where the position of the root edge and the degree of entry of each vertex are unknown.
Huber~\textit{et al.}~\cite{orienting} considered the $C$-orientation problem under the assumption that the input graph $N$ is binary, unlike Problem~\ref{prob:constrained.orientation}.

\begin{problem}\label{prob:C-orientation}
(\textsc{$C$-Orientation})
\begin{description}
   \item[INPUT:]\mbox{}
            An undirected binary phylogenetic network $N$ on $X$, and a class $C$ of directed binary phylogenetic network on $X$.  
   \item[OUTPUT:]\mbox{}
	    A $C$-orientation $\hat{N}$ of $N$ if it exists, and NO otherwise.
\end{description}
\end{problem}

As there are $|E|$ ways to choose the root edge and $\binom{|V|}{|R|}$ ways to choose the $|R|$ reticulations among $|V|$ vertices, they described a simple exponential time to determine whether $N$ is $C$-orientable or not, by checking all possible cases using the linear-time algorithm for solving Problem~\ref{prob:constrained.orientation}. They gave an FPT algorithm for a particular $C$ satisfying several conditions, and also proved that the $C$-orientation problem is NP-complete when $C$ is the class of tree-based networks. However, it is still not yet well understood for which class $C$ the $C$-orientation problem is NP-complete or solvable in polynomial time. 

We must emphasize that if the class $C^\prime$ is a subclass of $C$, this does not implies that $C^\prime$-orientation is easier or harder than $C$-orientation.
This motivates us to study the following problem.

\begin{problem}\label{prob:tree-child-orientation}
(\textsc{Tree-child Orientation})
\begin{description}
   \item[INPUT:]\mbox{}
            An undirected binary phylogenetic network $N$ on $X$ and the class $C$ of tree-child networks on $X$.
   \item[OUTPUT:]\mbox{}
	    A tree-child orientation $\hat{N}$ of $N$ if it exists, and NO otherwise.
\end{description}
\end{problem}

We say that an undirected binary phylogenetic network $N$ is \emph{tree-child orientable} if $\hat{N}$ in Problem~\ref{prob:tree-child-orientation} exists and call such an orientation $\hat{N}$ a \emph{tree-child orientation} of $N$ (see  Figure \ref{fig:TCO}).



%
%

\begin{figure}[hbt]
\centering
\includegraphics[width=0.8\textwidth]{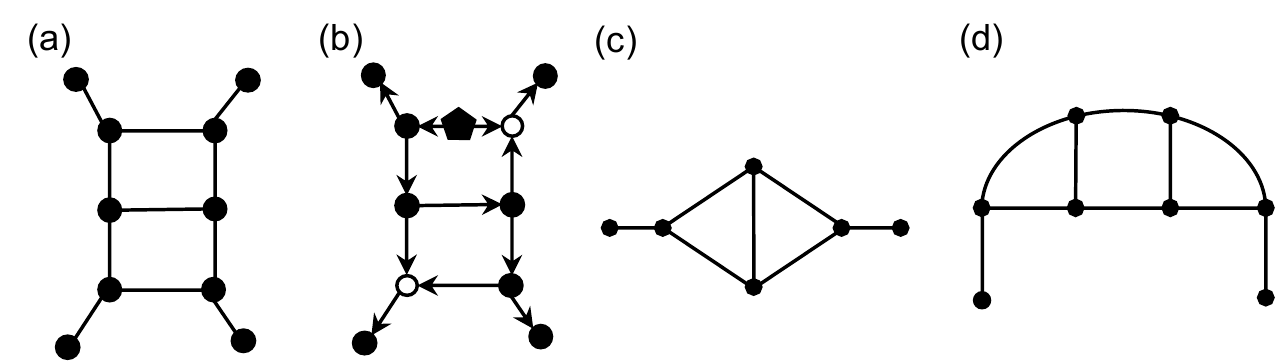}
  \caption{(a) An example of a tree-child orientable network $N$. (b) A tree-child orientation $\hat{N}$ of $N$, where the black pentagon is the  root $\rho$ and white circles are reticulations. (c),(d) Examples of undirected graphs that are orientable to some phylogenetic networks but not tree-child orientable. 
   \label{fig:TCO}}
\end{figure}

\section{Necessary Conditions for Tree-Child Orientability}\label{sec:necessary.conditions}
There is no previous study that has focused on Problem~\ref{prob:tree-child-orientation} so far. 
In this section, we will give some necessary conditions to ensure that $N$ is tree-child orientable. 

\subsection{The number of edges of tree-child orientable graphs}

We recall the following useful result together with its proof from \cite{CountingPhylogeneticNetworks} as it clarifies the relationships between the numbers of reticulations, tree vertices, leaves, and edges of directed binary phylogenetic networks.

\begin{proposition}[{\cite[Lemma 2.1]{CountingPhylogeneticNetworks}}]\label{relation}
Let $N=(V, A)$ be any directed binary phylogenetic network on $X$ that has $|X|$ leaves, $r$ reticulations, $t$ tree vertices and the root $\rho$ with $outdeg(\rho)=2$. Then, we have  $t = |X|+r-2$,  $|V| = 2t+3$, and $|A|=3r+2|X|-2$. 
\end{proposition}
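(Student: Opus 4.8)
The plan is to prove all three identities by elementary double counting of vertices and arcs, organised by vertex type. Every vertex of $N$ is exactly one of the following: the root $\rho$, a leaf (there are $|X|$ of them), a reticulation (there are $r$), or a tree vertex (there are $t$). Since these classes are disjoint and exhaust $V$, the first bookkeeping fact I would record is simply $|V| = 1 + |X| + r + t$.

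Next I would count the arcs $|A|$ in two different ways, using the fact that in any digraph $\sum_{v} d^-(v) = \sum_{v} d^+(v) = |A|$. Summing out-degrees: $\rho$ contributes $2$, each tree vertex contributes $2$, each reticulation contributes $1$, and each leaf contributes $0$, giving $|A| = 2 + 2t + r$. Summing in-degrees: $\rho$ contributes $0$, each tree vertex contributes $1$, each reticulation contributes $2$, and each leaf contributes $1$, giving $|A| = t + 2r + |X|$. Equating the two expressions yields $2 + 2t + r = t + 2r + |X|$, i.e.\ $t = |X| + r - 2$, which is the first claimed identity.

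Substituting this into $|V| = 1 + |X| + r + t$ and writing $|X| = t - r + 2$ gives $|V| = 1 + (t - r + 2) + r + t = 2t + 3$, the second identity. Finally, substituting $t = |X| + r - 2$ into $|A| = 2 + 2t + r$ gives $|A| = 2 + 2(|X| + r - 2) + r = 3r + 2|X| - 2$, the third identity.

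I do not expect any genuine obstacle here; the only point requiring care is making sure the in-/out-degree contributions of each of the four vertex types are tabulated correctly from the definition of a directed binary phylogenetic network (in particular that leaves have in-degree $1$ and out-degree $0$, and that the root has in-degree $0$). Everything else is routine algebra. If one wanted to be slightly slicker, the same computation can be phrased as a single linear system in the unknowns $t$, $|A|$ once $|X|$ and $r$ are fixed, but the two-way arc count above is the cleanest presentation.
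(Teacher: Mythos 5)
Your proof is correct and follows essentially the same route as the paper: both count $|V|$ by vertex type and apply the directed handshaking lemma to equate the out-degree sum $2+2t+r$ with the in-degree sum $t+2r+|X|$, then derive the remaining identities by substitution. Your write-up just spells out the "other equations follow easily" step explicitly.
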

\begin{proof}
	Note first that $|V| = r+|X|+t+1$. The hand-shaking lemma for directed graphs states that the sum of the out-degrees equals the number $|A|$ of edges which, in turn, equals the sum of the in-degrees. Thus, we have $r+2t +2 = |A| = 2r+t +|X|$, which yields $t = r+|X|-2$. The other equations follow easily. This completes the proof. 
\end{proof}

\begin{proposition}[{\cite[Proposition 1]{Comparison}}]\label{leaves-reticulations}
For a directed binary phylogenetic network $N$ on $X$ with $r$ reticulations, if $N$ is a tree-child network, then $r\leq |X|-1$ holds.
\end{proposition}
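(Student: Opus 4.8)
The plan is to exploit the defining property of tree-child networks: every non-leaf vertex has at least one child that is a tree vertex or a leaf. I would phrase this as a map assigning to each non-leaf vertex $v$ a distinguished child $c(v)$ that is not a reticulation. First I would observe that each reticulation $w$ has in-degree two, so it has two parents; since a reticulation is never the distinguished non-reticulation child of any vertex, the edges into $w$ are never of the form $(v, c(v))$. More usefully, I would go the other way: consider, for each reticulation $w$, its unique child; by the tree-child condition applied to $w$ itself, that child is a tree vertex or a leaf. But the cleanest route is to build an injection from the set $R$ of reticulations into $X$.

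\textbf{Key steps.} The main step is to define, for each reticulation $w$, a directed path $P_w$ that starts at $w$, and at each vertex proceeds to a child that is not a reticulation (such a child exists by the tree-child property), continuing until a leaf is reached; this path must terminate at a leaf $\ell(w) \in X$ because the network is a finite DAG and every internal vertex on the path has a non-reticulation child to move to. Next I would show the map $w \mapsto \ell(w)$ is injective: if two reticulations $w_1 \neq w_2$ led to the same leaf, then the two paths $P_{w_1}$ and $P_{w_2}$ would have to merge at some vertex $u$, and the vertex where they first coincide would be reached along two distinct incoming edges on these tree-paths — but by construction every vertex after the starting reticulation is entered as the chosen non-reticulation child of its predecessor, forcing it to have in-degree behaviour incompatible with being a reticulation, while a non-reticulation vertex has in-degree one, so it cannot be the merge point of two distinct paths. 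Hence $\ell$ is injective, giving $r = |R| \leq |X|$. Finally, to sharpen $r \leq |X|$ to $r \leq |X| - 1$, I would argue that the root $\rho$ also initiates such a non-reticulation-child path ending at some leaf, and that leaf cannot be $\ell(w)$ for any reticulation $w$ (otherwise the reticulation path and the root path would merge, contradicting the in-degree-one property of the merge vertex again); thus the $r$ leaves hit by reticulations avoid at least one leaf, namely the one reached from $\rho$.

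\textbf{Main obstacle.} The delicate point is the injectivity argument: making precise why two distinct "greedy non-reticulation-child" paths cannot share a vertex. The subtlety is that the chosen child $c(v)$ need not be unique, so one must fix a choice function first and then reason that, once fixed, any vertex lying on such a path (other than the initial reticulation) is entered via exactly the edge $(v, c(v))$ from its sole relevant predecessor; combined with the fact that non-reticulations have in-degree one, this pins down the path backwards and prevents merging. Care is also needed at the leaf end, since a leaf has in-degree one, so it likewise cannot be the terminus of two distinct such paths — this is really the same observation. I expect the whole argument to be short once the choice function and the "in-degree one" bookkeeping are set up cleanly.
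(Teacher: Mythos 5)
The paper states this proposition without proof, simply citing \cite{Comparison}; your argument is correct and is essentially the standard ``tree path'' proof from that reference: fix a choice of non-reticulation child at each non-leaf vertex, follow it from each reticulation (and from the root) down to a leaf, and use the fact that tree vertices and leaves have in-degree one to rule out a merge point, giving an injection from the reticulations together with the root into $X$ and hence $r+1\leq |X|$. Your identification of the injectivity step (the first common vertex of two such paths would need two distinct parents, yet is by construction a non-reticulation) is exactly the right delicate point, and your handling of it is sound.
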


\begin{lemma}\label{root.children}
If $\hat{N}$ is a directed binary phylogenetic network on $X$, then,  at least one of the children of the root $\rho$ of $\hat{N}$ is a tree vertex. 
\end{lemma}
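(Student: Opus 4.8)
The plan is to combine a topological ordering of $\hat{N}$ with acyclicity. First I would record two basic facts about $\hat{N}$: since every non-root vertex has in-degree $1$ (leaves and tree vertices) or $2$ (reticulations), the root $\rho$ is the \emph{unique} source of the acyclic digraph $\hat{N}$; and since $\hat{N}$ is weakly connected with a unique source, every vertex is reachable from $\rho$ (walk backwards along incoming edges --- acyclicity forces this walk to terminate, and the only place it can terminate is $\rho$). Now fix any topological ordering $\rho = u_1, u_2, \dots$ of $V(\hat{N})$ and look at $u_2$. All in-neighbours of $u_2$ precede it in the ordering, so $\rho$ is its only possible parent; as $\hat{N}$ is simple this gives $u_2$ in-degree at most $1$, and since $u_2 \ne \rho$ the in-degree is exactly $1$ with $\rho$ the unique parent. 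Hence $u_2$ is one of the two (distinct) children $c_1, c_2$ of $\rho$, and being of in-degree $1$ it is either a leaf or a tree vertex; in particular a reticulation can never occupy the second position, so $\rho$ cannot have two reticulation children.

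If $u_2$ is a tree vertex we are already done, so the one remaining case is that $u_2$ is a leaf; write $c_1 = u_2$ and let $c_2$ be the other child of $\rho$. The crux of the argument is that, since $c_1$ is a leaf and hence has no out-edges, any directed path from $\rho$ that does not end at $c_1$ must use the edge $(\rho, c_2)$ as its first step; consequently every vertex of $V(\hat{N}) \setminus \{\rho, c_1\}$ is reachable from $c_2$. Suppose now, for contradiction, that $c_2$ is a reticulation, and let $p$ be its parent other than $\rho$. Then $p \ne \rho$ (both out-edges of $\rho$ are already accounted for), $p \ne c_1$ (which has out-degree $0$), and $p \ne c_2$ (no loops), so $p \in V(\hat{N}) \setminus \{\rho, c_1\}$ and there is a directed path from $c_2$ to $p$. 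Appending the edge $(p, c_2)$ to this path yields a closed directed walk through $c_2$ of positive length, hence a directed cycle, contradicting acyclicity of $\hat{N}$. Therefore $c_2$ is not a reticulation, so it is a leaf or a tree vertex.

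It remains to rule out the possibility that $c_2$ is also a leaf. In that case both out-edges of $\rho$ go to leaves and $c_1, c_2$ have no out-edges, so $V(\hat{N}) = \{\rho, c_1, c_2\}$, $|X| = 2$, and $\hat{N}$ is the trivial two-leaf tree --- a degenerate network that we tacitly exclude (equivalently, one assumes $\hat{N}$ has a non-leaf non-root vertex, which holds whenever $|X| \ge 3$ or $\hat{N}$ has at least one reticulation). Under that assumption $c_2$ must be a tree vertex, completing the proof. The main obstacle I anticipate is the bookkeeping in the acyclicity step of the second paragraph --- carefully verifying that the second parent $p$ of $c_2$ genuinely lies strictly below $c_2$, so that the closed walk formed by a directed path from $c_2$ to $p$ followed by the edge $(p, c_2)$ is non-trivial --- whereas the topological-ordering observation and the leaf/tree-vertex/reticulation case split are routine.
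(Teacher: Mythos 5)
Your proof is correct, but it takes a genuinely different and more self-contained route than the paper. The paper's proof is a one-line appeal to the reticulation-cut machinery of Huber \emph{et al.}: if both children of $\rho$ were reticulations, the pair of root edges together with those two children would form a reticulation cut, which cannot exist in an orientable network. That argument only rules out the ``two reticulation children'' configuration; it silently leaves open the case where one child is a leaf and the other a reticulation (in which case neither child is a tree vertex), as well as the degenerate two-leaf tree. You instead argue from first principles --- $\rho$ is the unique source, so the second vertex in any topological order is a child of $\rho$ of in-degree $1$, hence not a reticulation --- which reproves the paper's case without invoking degree cuts, and then you go further: your acyclicity argument (the second parent $p$ of a reticulation child $c_2$ would be reachable from $c_2$, closing a directed cycle) disposes of the leaf-plus-reticulation case that the paper's proof does not address, and you correctly identify the two-leaf tree as a literal counterexample to the statement that must be tacitly excluded (it is harmless for the paper's application, where the lemma is only ever used in the form ``the root cannot have two reticulation children''). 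In short: the paper's proof is shorter because it leans on an imported theorem and only targets the configuration it actually needs; yours is longer but elementary, complete, and more honest about the edge cases. Both are acceptable; yours would stand alone without the citation.
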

\begin{proof}
By Theorem 3 in Huber~\textit{et al.}~\cite{orienting}, 
if both children of $\rho$ were reticulations, there would exist a reticulation cut as shown in Figure~\ref{fig-Neither-of-the-two-is-reticulation}. 
\end{proof}
\begin{figure}[hbt]
  \includegraphics[width=0.25\textwidth]{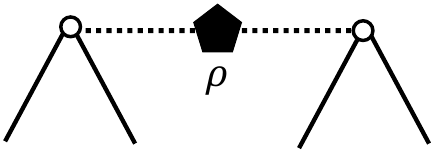}
  \centering
  \caption{Proof of Lemma~\ref{root.children}. The black vertex is the root and the white circles are reticulations. The set of the two edges in dotted line, together with the set of the two reticulations, forms a ``reticulation cut'', which is not allowed to exist in orientable networks by Theorem 3 in Huber~\textit{et al.}~\cite{orienting}.
  \label{fig-Neither-of-the-two-is-reticulation}}
\end{figure}


\begin{theorem}\label{edges.leq.5X-6}
If a binary undirected phylogenetic network $N=(V,E)$ on $X$ is tree-child orientable, then $|E|\leq 5|X|-6$ holds. Moreover, this upper bound is tight. 
\end{theorem}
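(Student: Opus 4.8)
The plan is to split the statement into the inequality, which is a two-line counting argument, and the tightness, which requires exhibiting an explicit family.

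First I would prove the bound. Suppose $N=(V,E)$ is tree-child orientable and let $\hat N$ be a tree-child orientation of $N$. By the definition of orienting, $\hat N$ is obtained from $N$ by subdividing a chosen root edge $e_\rho\in E$ with a new root $\rho$ and then assigning directions; subdividing an edge adds exactly one edge, so $\hat N$ is a directed binary phylogenetic network on $X$ with $|E|+1$ edges. Writing $r$ for its number of reticulations, Proposition~\ref{relation} gives $|E|+1=3r+2|X|-2$, i.e.\ $|E|=3r+2|X|-3$. Since $\hat N$ is a tree-child network, Proposition~\ref{leaves-reticulations} gives $r\le |X|-1$, and substituting yields $|E|\le 3(|X|-1)+2|X|-3=5|X|-6$. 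This already finishes the inequality; there is nothing more to do for this half.

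For tightness I would show that for every integer $n=|X|\ge 3$ there is an undirected binary phylogenetic network on $n$ leaves with exactly $5n-6$ edges that is tree-child orientable. By the identity $|E|=3r+2|X|-3$ obtained above, this is the same as building a tree-child network $\hat N_n$ on $n$ leaves attaining the maximum number $r=n-1$ of reticulations (Proposition~\ref{leaves-reticulations}) and then checking that suppressing its root yields a \emph{simple} undirected graph. A convenient construction is a ``caterpillar'': take a directed path of tree vertices $s_1\to s_2\to\cdots\to s_{2n-3}$, hang one leaf off $s_{2n-3}$, create one reticulation fed by each consecutive pair $\{s_1,s_2\},\{s_3,s_4\},\dots,\{s_{2n-5},s_{2n-4}\}$ and one more reticulation fed by $s_{2n-3}$ together with $\rho$, put a private leaf below every reticulation, and send the two out-edges of $\rho$ to $s_1$ and to this last reticulation. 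A direct count gives $n$ leaves, $2n-3$ tree vertices and $n-1$ reticulations, so $|E(N_n)|=5n-6$ by Proposition~\ref{relation}.

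It then remains to verify three things about $\hat N_n$: (i) it is a directed binary phylogenetic network -- the in/out-degrees are as prescribed, there is a unique root, and $\rho,s_1,\dots,s_{2n-3}$, then the reticulations, then the leaves is a topological order, so $\hat N_n$ is acyclic; (ii) it is tree-child -- each $s_i$ with $i<2n-3$ has the tree-vertex child $s_{i+1}$, the vertex $s_{2n-3}$ and every reticulation has a leaf child, and the tree vertex $s_1$ is a child of $\rho$ (in line with Lemma~\ref{root.children}); and (iii) deleting $\rho$ and joining its two neighbours $s_1$ and the last reticulation produces no parallel edge, because the only neighbours of $s_1$ in $\hat N_n$ are $\rho$, $s_2$, and the reticulation it was paired with. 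Of these, (iii) is the point that really has to be engineered: a carelessly chosen maximally-reticulated tree-child network can suppress its root into a multigraph, so the construction must be arranged so that the two root children are non-adjacent. Everything else is routine, and the upper bound is immediate, so the only genuinely delicate part of the theorem is designing a clean tight family.
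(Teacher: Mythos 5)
Your upper-bound argument is the paper's argument verbatim: subdividing the root edge gives $|\hat E|=|E|+1$, Proposition~\ref{relation} gives $|\hat E|=3r+2|X|-2$, and Proposition~\ref{leaves-reticulations} caps $r$ at $|X|-1$, yielding $|E|\le 5|X|-6$. Where you genuinely diverge is on tightness. The paper discharges this by pointing to a single example with $|X|=3$ and $9$ edges (Figure~\ref{fig:tightsize}), which literally suffices for the statement as written but says nothing about larger leaf sets. You instead build, for every $n\ge 3$, a caterpillar-like tree-child network with $2n-3$ tree vertices and the maximal $n-1$ reticulations, and I have checked that your construction is sound: the degree counts, the topological order, the tree-child property (each $s_i$ with $i<2n-3$ has tree-vertex child $s_{i+1}$, and $s_{2n-3}$ and all reticulations have leaf children), and---the point you rightly flag as the only delicate one---the non-adjacency of the two root children $s_1$ and the last reticulation, so that suppressing $\rho$ yields a simple undirected binary network with exactly $5n-6$ edges. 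Your version therefore proves the stronger statement that the bound is attained for every $|X|\ge 3$, at the cost of a construction that must be verified rather than a picture that can be inspected; the paper's route is shorter but establishes tightness only in the weakest sense.
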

\begin{proof}
If $\hat{N}=(\hat{V}, \hat{E})$ is a tree-child orientation of $N$, then $|\hat{E}|=|E|+1$ holds because orienting $N$ to $\hat{N}$ involves the operation of inserting a root into an edge of $N$, i.e. subdividing an edge of $N$ exactly once. The proof will be completed if we can show that  $|\hat{E}|\leq 5|X|-5$.
Let $r$ be the number of reticulations of $\hat{N}$. 
Then, Proposition~\ref{relation} gives $|\hat{E}|=2|X|+3r-2$,  and  Proposition~\ref{leaves-reticulations} gives $r\leq |X|-1$. 
Thus, we obtain $|\hat{E}|\leq 2|X|+3(|X|-1)-2=5|X|-5$. The network in Figure \ref{fig:tightsize} ensures the tightness of this bound.  This completes the proof. 
\end{proof}
\begin{figure}[hbt]
\centering
  \includegraphics[width=0.5\textwidth]{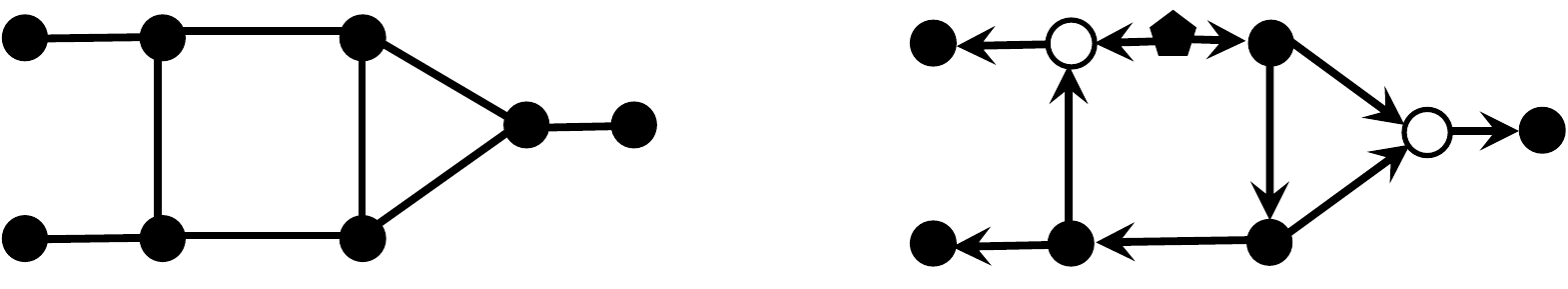}
  \caption{An example that shows the bound in Theorem~\ref{edges.leq.5X-6} is 
  tight. Left: a binary phylogenetic network with $|X|=3$ leaves and $5|X|-6=9$ 
  edges. Right: a tree-child orientation of the network (open circles represent 
  reticulation vertices). 
 \label{fig:tightsize}}
\end{figure}

The condition given in Theorem~\ref{edges.leq.5X-6} is not a sufficient condition but it is practically useful to check the tree-child orientability. For example, we can immediately see that the two graphs in Figure~\ref{fig:TCO}(c)(d) are not tree-child orientable.

\subsection{The shortest-path distances between leaves and implications for appropriate rooting}

Tree-child networks are a subclass of cherry-picking networks (also known as orchard networks) that were introduced in~\cite{erdHos2019class, janssen2021cherry}.  Therefore, tree-child orientable networks need to be orchard-orientable. For the reader's convenience, we now recall the definitions of cherries and reticulated cherries from~\cite{determingphylo.} as follows (see Figure~\ref{fig:cherry.and.reticulated.cherry} for an illustration). 

Let $\hat{N}$ be a directed binary phylogenetic network.
A \emph{cherry} of $\hat{N}$ is a pair $\{x_i, x_j\}$ of leaves such that the parent $x_i^\prime$ of $x_i$ and the parent $x_j^\prime$ of $x_j$ are the same vertex of $\hat{N}$. 
A \emph{reticulated cherry} of $\hat{N}$ is a pair $\{x_i, x_j\}$ of leaves such that there exists an undirected path with two internal vertices between $x_i$ and $x_j$ in the underlying graph $N$, exactly one of which is a reticulation. 

\begin{figure}[hbt]
\centering
  \includegraphics[width=0.3\textwidth]{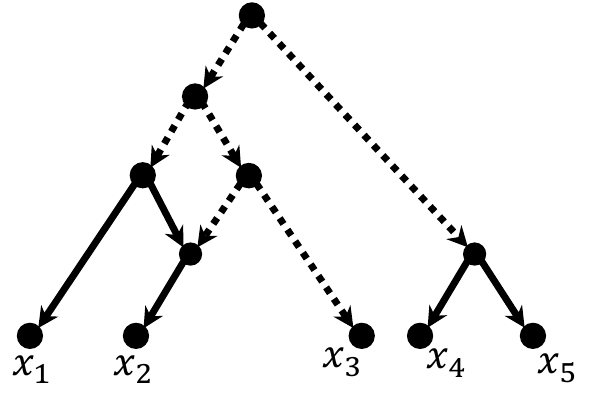}
  \caption{An illustration of a cherry and reticulated cherry (solid lines). In the above network, $\{(x_1, x_2)\}$ is a reticulated cherry and  $\{(x_4, x_5)\}$ is a cherry.
 \label{fig:cherry.and.reticulated.cherry}}
\end{figure}

\begin{proposition}[{\cite[Lemma 4.1]{determingphylo.}}]\label{TC.has.cherry}
    Let $N$ be a binary tree-child network on $X$. If $|X|\geq 2$, then $N$ contains either a cherry or a reticulated cherry.
\end{proposition}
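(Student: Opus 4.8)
The plan is to induct on the number of reticulations $r$ of the tree-child network $N$, and to locate the desired cherry or reticulated cherry by descending into the network along a longest directed path. The base case $r=0$ is just the well-known fact that a binary phylogenetic tree on $|X|\ge 2$ leaves contains a cherry: take a deepest internal vertex, i.e. one all of whose children are leaves; in a binary tree such a vertex has two leaf-children, giving a cherry. So assume $r\ge 1$.

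For the inductive step, I would argue directly rather than by deletion. Consider a directed path $P$ from the root $\rho$ that is of maximum length (equivalently, pick a leaf $x$ at maximum distance from $\rho$ and let $P$ be a longest root-to-$x$ path), and let $x$ be its terminal leaf with parent $p$. Since $N$ is tree-child, $p$ — being a non-leaf vertex — is the parent of a tree vertex or a leaf; I want to analyse the other child $p'$ of $p$ (if $p$ is a tree vertex) or, if $p$ is a reticulation, to step back further. The key case distinction is on whether $p$ is a tree vertex or a reticulation. If $p$ is a tree vertex with children $x$ and $p'$: if $p'$ is a leaf we immediately get the cherry $\{x,p'\}$; if $p'$ is a tree vertex, then by tree-childness $p'$ has a child that is a tree vertex or a leaf, and iterating this down the subtree below $p'$ (which contains no reticulation on the ``tree-child guaranteed'' spine) we reach, in finitely many steps, a tree vertex both of whose children are leaves, again a cherry; if $p'$ is a reticulation, then its unique child $c$ is, by tree-childness applied to $p'$, a tree vertex or a leaf — and since $x$ (hence $c$, being at least as deep) sits at maximum depth, $c$ must be a leaf, so $\{x,c\}$ together with the path $x\!-\!p\!-\!p'\!-\!c$ through the single reticulation $p'$ is a reticulated cherry. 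If instead $p$ is itself a reticulation, then by the tree-child condition applied to the parent of $x$ we already used that $p$ is parent of the leaf $x$; now look at $p$'s two parents $q_1,q_2$ — each $q_i$, being a non-leaf, must (tree-child) be the parent of a tree vertex or a leaf, and that tree-vertex-or-leaf child cannot be $p$ since $p$ is a reticulation; running the same deepest-vertex descent from the other child of some $q_i$ produces a cherry. Collecting these cases yields the claim.

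The main obstacle I anticipate is making the ``descend to a deepest vertex'' argument airtight in the presence of reticulations: a naive ``take a deepest internal vertex'' can fail because its children might be reticulations, and one has to use both the maximality of the chosen path and the tree-child guarantee at the right vertex to force a genuine leaf to appear. The cleanest way to sidestep this is probably to phrase the whole thing in terms of a leaf $x$ of maximum distance from the root and a shortest parent-path above it, and then to invoke tree-childness exactly once at the parent of $x$: that parent is guaranteed a tree-vertex-or-leaf child, and maximality of the distance forces that child (if not $x$ itself, reached via at most one reticulation) to be a leaf. I would also double-check the boundary situations — e.g. when $\rho$ itself is the parent of $x$, which is handled by Lemma~\ref{root.children} ensuring $\rho$ has a tree-vertex child to recurse into — and the degenerate case $|X|=2$, where $N$ must already be a single cherry or reticulated cherry.
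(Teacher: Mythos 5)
The paper does not actually prove this proposition; it is imported as Lemma~4.1 of the cited reference, so there is no in-paper argument to compare against. Judged on its own terms, your proposal has the right skeleton --- take a longest directed path $P$ from the root, let $x$ be its terminal leaf and $p$ its parent, and split on whether $p$ is a tree vertex or a reticulation --- but two steps do not hold up. First, in the subcase where $p$ is a tree vertex and its other child $p'$ is also a tree vertex, you claim that ``iterating'' the tree-child condition below $p'$ reaches a tree vertex with two leaf children. That is false in general: the tree-child condition only guarantees a descending path of tree vertices ending in a leaf, and each internal vertex on that path may have a reticulation as its \emph{other} child, so no cherry need appear below $p'$. (The subcase is in fact vacuous --- if $p'$ were a tree vertex or a reticulation, $P$ could be extended through $p'$, contradicting maximality --- but that is not the argument you give.)

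Second, and more seriously, the case where $p$ is a reticulation is precisely the case that produces reticulated cherries, yet your treatment of it only ever claims to ``produce a cherry'' by an unspecified ``deepest-vertex descent'' from the sibling of $p$ under some parent $q_i$. What is needed is: let $q$ be the predecessor of $p$ on $P$ and $c$ the other child of $q$; tree-childness at $q$ (not at $p$) forces $c$ to be a tree vertex or a leaf, since the child $p$ is a reticulation; if $c$ is a leaf, then $\{x,c\}$ is a reticulated cherry via the path $x\,p\,q\,c$; if $c$ is a tree vertex, then maximality of $P$ --- crucially using that $q$ lies on $P$ --- forces both children of $c$ to be leaves, giving a cherry. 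Your version omits the reticulated-cherry outcome entirely, argues from an arbitrary parent $q_i$ of $p$ (for the parent not on $P$ no depth bound is available, so the descent proves nothing), and your final ``clean'' reformulation --- invoking tree-childness ``exactly once at the parent of $x$'' --- cannot work when that parent is a reticulation, since its only child is the leaf $x$ and the condition is then vacuously satisfied; the tree-child hypothesis must be applied one level higher.
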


From the above lemma, we obtain the following result. 

\begin{proposition}\label{2or3}
    Let $N$ be an undirected phylogenetic network on $X$. If $N$ does not have leaves $x, x^\prime\in X$ with either $d_N(x, x^\prime)=2$ or $d_N(x, x^\prime)=3$, then $N$ is not tree-child orientable. 
\end{proposition}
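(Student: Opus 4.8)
The plan is to prove the contrapositive statement in a slightly more convenient form: if $N$ is tree-child orientable, then $N$ must contain two leaves $x, x'$ with $d_N(x,x') \in \{2,3\}$. Suppose $\hat{N}$ is a tree-child orientation of $N$. Since $|X|\geq 2$ (by our standing assumption on label sets), Proposition~\ref{TC.has.cherry} applies to $\hat{N}$, so $\hat{N}$ contains either a cherry or a reticulated cherry on some pair of leaves $\{x_i, x_j\}$. I would then split into these two cases and, in each, read off the distance $d_N(x_i, x_j)$ in the underlying undirected graph $N$.

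First I would handle the cherry case. If $\{x_i, x_j\}$ is a cherry of $\hat{N}$, then the parents $x_i'$ and $x_j'$ coincide: there is a single vertex $p$ with directed edges $(p, x_i)$ and $(p, x_j)$. Ignoring directions, this gives an undirected path $x_i - p - x_j$ of length $2$ in $N$. Since $N$ is simple and $x_i, x_j$ are leaves (degree $1$), there is no shorter path, so $d_N(x_i, x_j) = 2$. Next, the reticulated cherry case. By the definition recalled just before Proposition~\ref{TC.has.cherry}, a reticulated cherry $\{x_i, x_j\}$ comes with an undirected path between $x_i$ and $x_j$ in $N$ that has exactly two internal vertices, hence has length $3$. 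Again because $x_i, x_j$ have degree $1$ in $N$, any path between them must leave $x_i$ through its unique neighbour and enter $x_j$ through its unique neighbour; if these two neighbours were equal we would be in the length-$2$ situation, and in general the shortest path has length either $2$ or $3$. In the reticulated cherry configuration the two internal vertices are distinct (one is a reticulation and the parents are distinct tree-type vertices), so the path realizing the reticulated cherry already has length $3$, giving $d_N(x_i, x_j) \leq 3$; combined with $x_i \neq x_j$ having distinct neighbours this yields $d_N(x_i, x_j) = 3$ (or $2$, which is still in the allowed set). In both cases we have produced leaves $x, x'$ with $d_N(x, x') \in \{2, 3\}$, contradicting the hypothesis of the proposition; hence $N$ is not tree-child orientable.

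The only subtlety I anticipate is bookkeeping about the underlying graph: Proposition~\ref{TC.has.cherry} is stated for the directed tree-child network $\hat{N}$, while the distance condition in Proposition~\ref{2or3} is about the undirected network $N$. I need to be careful that $N$ is exactly the underlying graph of $\hat{N}$ with the root $\rho$ suppressed (the rooting step subdivides one edge of $N$ by $\rho$, and orienting does not otherwise change the graph), so distances in $N$ and in the underlying graph of $N_\rho$ differ by at most the trivial contraction of the degree-two vertex $\rho$. Since a cherry or reticulated cherry involves only vertices "below" their common ancestor, $\rho$ is not among the internal vertices of the relevant short paths unless $\rho$ itself is that common ancestor or reticulation's neighbour; even then, suppressing the degree-two vertex $\rho$ only shortens the path, keeping $d_N(x_i, x_j) \leq 3$, and the lower bound $d_N(x_i, x_j) \geq 2$ is automatic from simplicity and the leaves having degree $1$. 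So the main obstacle is not a deep one — it is just making the undirected/directed translation airtight — and I would devote a sentence or two to it rather than a full lemma.
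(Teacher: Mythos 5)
Your proof is correct and follows the same route as the paper: the paper's own argument is a two-sentence appeal to Proposition~\ref{TC.has.cherry}, translating a cherry into a leaf pair at distance $2$ and a reticulated cherry into a leaf pair at distance $3$. Your version simply fills in the details the paper leaves implicit, including the (correctly handled) point that suppressing the inserted root $\rho$ can only shorten the relevant path.
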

\begin{proof}
By Proposition~\ref{TC.has.cherry}, tree-child networks have a pair of leaves which is called cherry or reticulated cherry. This means that if $N$ does not have a pair of leaves that are at a distance of 2 or 3 from each other, then $N$ is not tree-child orientable. This completes the proof. 
\end{proof}

The condition in Proposition~\ref{2or3} is useful for appropriate rooting for tree-child orientation. For example, given the undirected graph in Figure~\ref{fig:not-tree-child-orientable1}, one can easily choose a right root edge $e_\rho$ for tree-child orientation using the distance condition.


\begin{figure}[hbt]
\centering
  \includegraphics[width=0.6\textwidth]{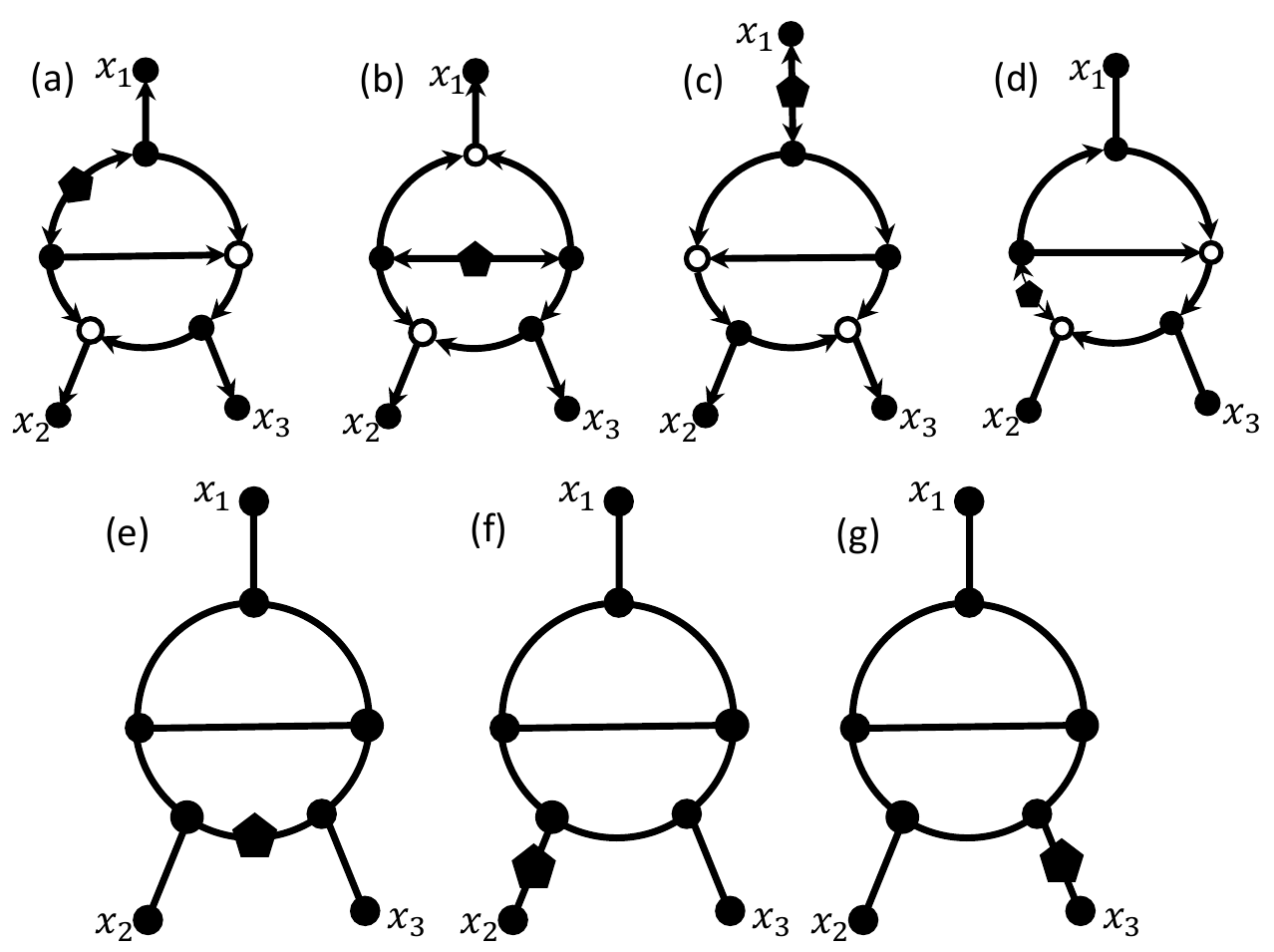}
  \caption{The possible choices (up to symmetry) for the root edge $e_\rho$ for a tree-child orientable graph. It is possible to create a tree-child network if the root is introduced as in (a)--(d), but not if the root is placed as in (e)--(g). 
  \label{fig:not-tree-child-orientable1}}
\end{figure}

\subsection{The number of reticulation vertices}
Even if we want to orient an undirected binary phylogenetic network $N$ on $X$ to any directed binary phylogenetic network $\hat{N}$ on $X$ without considering the tree-child property of $\hat{N}$, we must carefully determine which vertices of $N$ are to be reticulations in $\hat{N}$. 
Fortunately, however, the number $|R|$ of reticulations in $\hat{N}$ is automatically determined by $N$ and can be easily computed. The reticulation number $R$ can be expressed by different formulas (for example, Proposition~\ref{relation} gives $|R|=\frac{1}{3}(|E|-2|X|+3)$), but the following is more widely known. This can be proved by different proofs but we omit the proof because it is included in the first condition of Theorem~\ref{thm:huber1-2}.

\begin{proposition}\label{betti.number}
    For any undirected binary phylogenetic network $N=(V,E)$ on $X$ and for any orientation $\hat{N}$ of $N$,  the number $|R|$ of  reticulations in $\hat{N}$ is given by $|R|=|E|-|V|+1$. 
\end{proposition}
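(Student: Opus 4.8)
The plan is to prove Proposition~\ref{betti.number} by a direct Euler‐characteristic (cyclomatic number) argument applied to the root-inserted graph $N_\rho$ and then relating it back to $N$. First I would recall that orienting $N$ produces $\hat{N}$ whose underlying graph is $N_\rho$, the graph obtained from $N$ by subdividing a single edge $e_\rho$ with the new vertex $\rho$; hence $|V(N_\rho)| = |V|+1$ and $|E(N_\rho)| = |E|+1$, and the quantity $|E|-|V|+1$ is unchanged when passing from $N$ to $N_\rho$ (one vertex and one edge are both added). So it suffices to show that the number of reticulations of $\hat{N}$ equals $|E(N_\rho)| - |V(N_\rho)| + 1$, i.e. the first Betti number of the connected graph $N_\rho$.

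Next I would count edges of $\hat{N}$ by summing in-degrees over all vertices, exactly as in the handshaking computation used in the proof of Proposition~\ref{relation}. Writing $|X|$ for the number of leaves, $t$ for the number of tree vertices, $r=|R|$ for the number of reticulations, every leaf contributes in-degree $1$, every tree vertex contributes in-degree $1$, the root contributes $0$, and every reticulation contributes $2$; therefore $|E(\hat{N})| = |X| + t + 2r$. On the vertex side, $|V(\hat{N})| = |X| + t + r + 1$. Subtracting gives $|E(\hat{N})| - |V(\hat{N})| + 1 = (|X|+t+2r) - (|X|+t+r+1) + 1 = r = |R|$. Since $|E(\hat{N})| = |E|+1$ and $|V(\hat{N})| = |V|+1$, this is precisely $|R| = (|E|+1) - (|V|+1) + 1 = |E| - |V| + 1$, as claimed.

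I expect no real obstacle here: the statement is essentially a restatement of the cyclomatic-number identity, and the only thing to be careful about is bookkeeping the root vertex correctly (it has in-degree $0$ and is not counted among leaves, tree vertices, or reticulations) and the subdivision of $e_\rho$ (which adds exactly one vertex and one edge, leaving $|E|-|V|$ invariant). As the paper itself notes, this also follows immediately from the equation $\sum_{v\in V} d_N^-(v) = |E|+1$ together with the binary characterization $|R| = |E| - |V| + 1$ recorded in the first statement of Theorem~\ref{thm:huber1-2}, so one could alternatively just cite that; but giving the short self-contained degree-sum proof is cleaner and makes the section more readable.
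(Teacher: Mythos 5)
Your proof is correct. Note that the paper does not actually write out a proof of this proposition at all: it explicitly omits one, remarking only that the statement ``is included in the first condition of Theorem~\ref{thm:huber1-2}'' (and, parenthetically, that Proposition~\ref{relation} gives the equivalent formula $|R|=\frac{1}{3}(|E|-2|X|+3)$). Your self-contained degree-sum computation supplies exactly the bookkeeping that the paper leaves implicit, and it is the same counting that underlies the cited equivalence in Theorem~\ref{thm:huber1-2}: summing in-degrees over $V(\hat{N})$ gives $|E|+1 = |X|+t+2r$, while $|V|+1 = |X|+t+r+1$, and subtracting yields $r = |E|-|V|+1$. You are also right to track the subdivision of $e_\rho$ explicitly, since $|E|-|V|+1$ is invariant under adding one vertex and one edge; this is the one place where a careless argument could go wrong, and you handle it correctly. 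So the proposal is a valid, slightly more explicit version of what the paper takes for granted; citing Theorem~\ref{thm:huber1-2} and writing out the handshaking identity are both acceptable, and your version has the advantage of being self-contained.
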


Note that $|E|-|V|+1$ is the minimum number of edges that need to be removed from a connected undirected graph $G=(V, E)$ to make $G$ a tree. This quantity is referred to using different terminology such as the circuit rank, the dimension of the cycle space, the first Betti number, and so on.  

\section{Undirected binary phylogenetic networks that are orientable but not 
tree-child orientable}\label{sec:Jelly.Ladder}
In this Section, we introduce classes of undirected phylogenetic networks that are orientable but not tree-child orientable.

\subsection{The jellyfish graph}
\emph{The jellyfish graph} $J_k$  ($k\geq 1$) is an undirected binary 
phylogenetic network on $X = \{x_1,\dots,x_k\}$ with $2k+6$ vertices and $2k+9$ 
edges, where the neighbor $x'_i$ of each leaf $x_i$ forms a path $x'_1,\dots, 
x'_k$ as described in Figure~\ref{fig:J1-J5}. 
We will show that the jellyfish graph is orientable but not tree-child orientable.

\begin{figure}[hbt]
\centering
\includegraphics[width=0.6\textwidth]{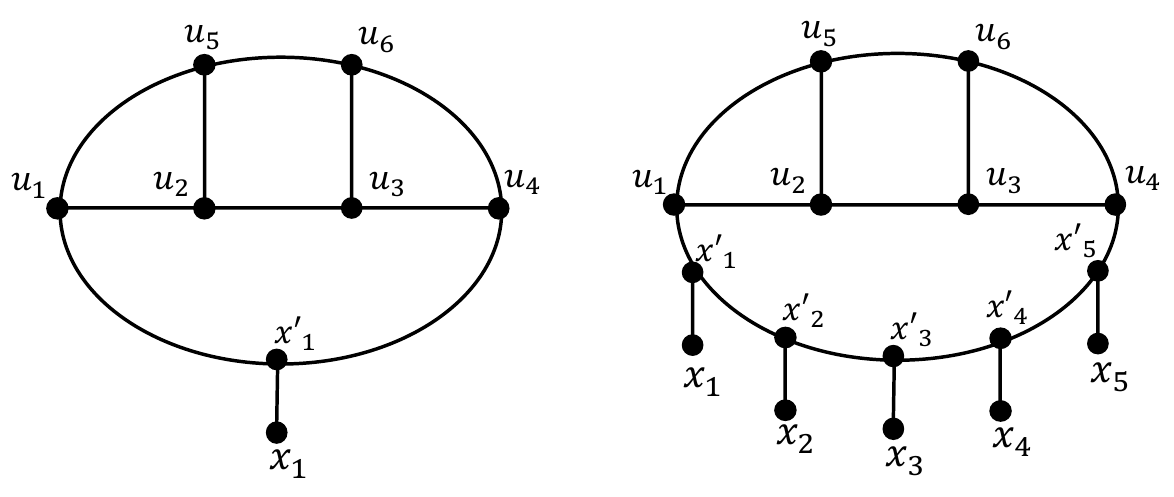}
  \caption{Left: The jellyfish graph $J_1$ with one leaf. Right: The jellyfish graph $J_5$ with five leaves.  
  \label{fig:J1-J5}
  }
\end{figure}

\begin{lemma}\label{jellyfish:at.most.one.ret}
Let $J_k$ be a jellyfish graph with a leaf-set $X=\{x_1,\dots, x_k\}$ and the set $X^\prime:=\{x_1^\prime,\dots, x_k^\prime\}$ of neighbors of the leaves. Then, 
 $X^\prime$ contains at most one reticulation in any orientation $\hat{J}_k$ of $J_k$.
\end{lemma}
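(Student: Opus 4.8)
The plan is to work entirely inside the "caterpillar" formed by the leaves and their neighbours, making no use of the body of $J_k$. If $k=1$ then $|X'|=1$ and there is nothing to prove, so assume $k\ge 2$. By the definition of the jellyfish graph, $x_1',\dots,x_k'$ is a path, each interior vertex $x_i'$ (for $2\le i\le k-1$) has exactly the three neighbours $x_{i-1}'$, $x_{i+1}'$ and $x_i$, and each endpoint $x_1'$, $x_k'$ has, besides its leaf and its unique path-neighbour, exactly one further neighbour, which lies in the body.

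First I would record two elementary facts about an arbitrary orientation $\hat J_k$. Since each $x_i$ is a leaf, its unique incident edge is oriented towards $x_i$, so every $x_i'$ has an edge going out to $x_i$; and no vertex of a directed binary phylogenetic network has out-degree exceeding $2$. Combining these: if $x_i'$ is a reticulation (in-degree $2$), then of its three incident edges the one to $x_i$ points out, so the other two point in. In particular, if an interior $x_i'$ is a reticulation then \emph{both} of its path-edges point towards it, and if an endpoint is a reticulation then its unique path-edge points towards it.

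Now suppose for contradiction that $x_a'$ and $x_b'$ with $a<b$ are both reticulations. Since $a\le k-1$ and $b\ge 2$, the vertices $x_{a+1}'$ and $x_{b-1}'$ are neighbours of $x_a'$ and $x_b'$ respectively, so by the previous paragraph the edge between $x_a'$ and $x_{a+1}'$ is oriented $x_{a+1}'\to x_a'$ and the edge between $x_{b-1}'$ and $x_b'$ is oriented $x_{b-1}'\to x_b'$. If $b=a+1$ these two statements orient a single edge in both directions, a contradiction. If $b\ge a+2$, then along the sub-path $x_a',x_{a+1}',\dots,x_b'$ the first edge points "backwards" (towards $x_a'$) while the last points "forwards" (towards $x_b'$); taking $c$ to be the smallest index with $f_c$ forward, where $f_\ell$ denotes the edge between $x_\ell'$ and $x_{\ell+1}'$, a short minimality argument shows $a<c<b$ and that $f_{c-1}$ is backward, so both path-edges at $x_c'$ point away from $x_c'$. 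Since $2\le c\le k-1$, $x_c'$ is an interior path vertex with exactly three incident edges, and together with the outgoing leaf-edge $(x_c',x_c)$ this forces $x_c'$ to have out-degree $3$ — impossible. Hence at most one vertex of $X'$ is a reticulation.

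The only point that needs care is this "turning-around" step together with the check that the vertex $x_c'$ produced is genuinely interior, so that it really has only three incident edges; this is exactly the minimality argument on the index of the first forward edge. Everything else is routine bookkeeping about leaf-edge orientations and the degree profile of directed binary phylogenetic networks, and in particular the structure of the body of the jellyfish is never used.
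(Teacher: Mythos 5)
There is a genuine gap: your argument ignores the root insertion. An orientation $\hat J_k$ is an orientation of $J_{k,\rho}$, the graph obtained by subdividing a chosen root edge $e_\rho$ with a new vertex $\rho$ of in-degree $0$ and out-degree $2$, and $e_\rho$ may be a pendant edge $(x_i,x_i')$ or a path edge $(x_i',x_{i+1}')$. In the first case your opening claim ``every $x_i'$ has an edge going out to $x_i$'' is false for that $i$ (the pendant edge is gone; $x_i'$ instead receives an edge \emph{from} $\rho$), and the turning-point vertex $x_c'$ may coincide with this $x_i'$, in which case two outgoing path-edges plus one incoming root-edge give the perfectly legal degree profile $(1,2)$ rather than out-degree $3$. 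In the second case the edge $f_i$ no longer exists as a single orientable edge: it is replaced by $\rho\to x_i'$ and $\rho\to x_{i+1}'$, so your ``first forward edge'' scan can stall at $\rho$ without producing a vertex of out-degree $3$ (and your $b=a+1$ subcase, where you orient ``a single edge in both directions,'' is likewise not a contradiction if that edge is $e_\rho$). Concretely, with the root on the pendant edge of $x_3$ in $J_5$, the local configuration in which $x_1'$ and $x_5'$ are both reticulations, with all flow running outward from $x_3'$ along the path, is entirely consistent with every degree constraint inside the caterpillar.

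Ruling out these configurations genuinely requires the part of the graph you deliberately discard: one must observe that the caterpillar is attached to the six-vertex body only through the two edges at $x_1'$ and $x_k'$, so if the root lies in the caterpillar and both of those edges are oriented into $x_1'$ and $x_k'$ (as two reticulations at the ends would force), the body is unreachable from $\rho$, i.e.\ one obtains a degree (reticulation) cut in the sense of Definition~\ref{def:degree-cut}. This is exactly why the paper's proof splits on the position of $e_\rho$ (Figure~\ref{fig:jellyfish.proof.rooting}, left versus right) and argues that every directed path from $\rho$ to a leaf must enter the caterpillar through the edge incoming to $x_1'$ or to $x_k'$. Your forward/backward turning-point argument is correct and is in fact a cleaner write-up of the paper's ``left'' case (root not incident to the caterpillar), but as it stands the proof is incomplete: you need to add the two root-on-the-caterpillar cases and, for them, a reachability or degree-cut argument that does use the body.
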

\begin{proof}
We use $J_5$ in Figure~\ref{fig:jellyfish.proof.rooting} but the proof is the same for any $k$. 
Suppose the root edge $e_\rho$ is neither a pendant edge $(x_i, x_i^\prime)$ nor an edge $(x_i^\prime, x_{i+1}^\prime)$ between pendant edges, as shown in the left of Figure~\ref{fig:jellyfish.proof.rooting}. Any directed path from $\rho$ to a leaf $x_i$ must include either the edge incoming to $x^\prime_1$ or the edge incoming to $x^\prime_5$. Suppose $x^\prime_3$ is a reticulation as described in the figure. Then, since the graph is binary, this specifies the direction of each solid-line edge, making any $x^\prime_i$ other than $x^\prime_3$ a vertex with out-degree 2. Thus, we can conclude that there exists at most one reticulation among $x^\prime_i$'s (white circle). When the root is inserted in a pendant edge $(x_i, x_i^\prime)$ or an edge $(x_i^\prime, x_{i+1}^\prime)$ as described in the right of Figure~\ref{fig:jellyfish.proof.rooting}, we obtain the same conclusion by a similar argument. This completes the proof. 
\end{proof}

\begin{theorem}\label{jellyfish.not.tc.orientable}
	The jellyfish graph $J_k$ is not tree-child orientable for any $k\in \mathbb{N}$.  
\end{theorem}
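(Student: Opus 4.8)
The plan is to combine a global count of the reticulations of any orientation of $J_k$ with a tight local analysis of the six-vertex ``head'' of $J_k$. Since $J_k$ has $2k+6$ vertices and $2k+9$ edges, Proposition~\ref{betti.number} shows that every orientation $\hat J_k$ of $J_k$ has exactly $(2k+9)-(2k+6)+1=4$ reticulations. For $k\le 4$ this already contradicts Proposition~\ref{leaves-reticulations}, which forces any tree-child network on $X$ to have at most $|X|-1=k-1<4$ reticulations, so $J_k$ is not tree-child orientable when $k\le 4$; from now on I assume $k\ge 5$ and suppose for contradiction that $\hat J_k$ is a tree-child orientation of $J_k$, with root $\rho$.

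Write $X=\{x_1,\dots,x_k\}$ and $X'=\{x_1',\dots,x_k'\}$, let $H:=V(J_k)\setminus(X\cup X')$ be the head (which has exactly six vertices), and let $h_1,h_k\in H$ be the neighbours in $H$ of $x_1'$ and $x_k'$, so that $h_1x_1'$ and $h_kx_k'$ are the only two edges joining $H$ to the rest of $J_k$. By Lemma~\ref{jellyfish:at.most.one.ret} at most one of the four reticulations lies in $X'$, and no leaf is a reticulation, so at least three of them lie in $H$. The first structural observation is that these two ``boundary'' edges cannot both be oriented into $H$: otherwise no edge of $\hat J_k$ would leave $H$, so the acyclic subgraph induced on $H$ (or on $H$ together with $\rho$, when the root edge is an edge inside $H$) would contain a sink, which could only be a leaf of $\hat J_k$ --- impossible, since $H$ contains no leaf. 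Hence at most one boundary edge enters $H$; and since $\rho$ is the unique source of $\hat J_k$, exactly one boundary edge enters $H$ unless $\rho$ itself lies in $H$. Comparing the in-degrees of the vertices of $H$ (and of $\rho$, when it lies in $H$) against the number of edges having a head there then forces $H$ to contain exactly three reticulations: four reticulations would need eight incoming edges, but the at most two remaining vertices of $H$, together with $\rho$ (which contributes at most two more) and the at most one inward boundary edge, supply at most seven. So $H$ carries exactly three reticulations and three tree vertices, plus $\rho$ when the root edge is an edge of $H$.

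The contradiction then comes from an edge-count inside $H$. A reticulation has a unique child, which the tree-child property requires to be a tree vertex or a leaf, and $H$ has no leaf, so $H$ has no edge from a reticulation to a reticulation; classify the remaining internal edges of $H$ as $T\to R$, $R\to T$, or $T\to T$. The three reticulations of $H$ absorb six incoming edges, of which at most one or two may come from $\rho$ or from outside $H$, forcing a large number of $T\to R$ edges; combined with the fact that each of the three reticulation out-edges either goes to a tree vertex or (at most twice in total) leaves $H$, this leaves at most two edges of $H$ that are of type $T\to T$ or that leave $H$ from a tree vertex. But the tree-child condition demands that each of the three tree vertices of $H$ be the tail of some edge going to a tree vertex or a leaf, i.e.\ of a $T\to T$ edge or of a boundary edge leaving $H$, and three distinct tails cannot be packed into at most two such edges. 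This contradiction shows that $\hat J_k$ cannot be tree-child, completing the proof.

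The main obstacle I anticipate is the bookkeeping in the last two paragraphs: one must treat separately the cases ``the root edge lies outside $H$'', ``the root edge is one of the two boundary edges'', and ``the root edge lies inside $H$'', and in the last case further distinguish whether $h_1$ and $h_k$ are reticulations, tracking the counts of $T\to R$, $R\to T$ and $T\to T$ edges precisely enough to push the number of admissible out-edges for the three tree vertices below three. Each individual case is short once the degree-accounting is in place, but making that accounting uniform --- in particular handling the subdivided edge correctly when the root sits on a boundary edge --- is the delicate point.
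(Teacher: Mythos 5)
Your proof is correct, but it takes a genuinely different route from the paper's. Both arguments start the same way: Proposition~\ref{betti.number} pins the number of reticulations of any orientation at exactly $4$, and Lemma~\ref{jellyfish:at.most.one.ret} confines at most one of them to $X'$, so at least three must sit in the six-vertex head $H$. From there the paper proceeds by exhaustive enumeration: it lists (up to symmetry, via Figures~\ref{fig:jellyfish.proof.reticulation1} and~\ref{fig:jellyfish.proof.reticulation0}) every admissible placement of the reticulations in the head, for each one considers all root-edge choices, and observes that each case produces adjacent reticulations or a tree vertex with two reticulation children. You replace that enumeration with a degree-counting argument that never looks at the internal wiring of the head: only that $H$ has six degree-three vertices, eight internal edges, and two boundary edges, of which at most one can be directed inward (else $H$ would contain a sink that is not a leaf). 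Counting tails of the $\geq 6$ edges entering the $\geq 3$ reticulations of $H$ --- none from other reticulations by the tree-child assumption, at most two from $\rho$ or from outside $H$ --- leaves the three tree vertices of $H$ with too few spare out-edges to each have a non-reticulation child. I checked the three root-placement cases (root edge disjoint from $H$, on a boundary edge, internal to $H$) and the arithmetic closes in each; your separate step establishing ``exactly three reticulations in $H$'' is not even strictly needed, since the same tail count rules out four reticulations directly. What your approach buys is rigor and robustness: it avoids the ``as can be verified easily'' figure-checking and would apply verbatim to any graph with a head of the same coarse shape; what the paper's approach buys is that it exhibits concretely which forbidden configuration arises in each case. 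One small point to tighten when writing it up: your ``supply at most seven'' bound for excluding four reticulations in $H$ already uses the absence of reticulation-to-reticulation edges, i.e.\ the tree-child hypothesis, so state explicitly there that you are arguing under the standing contradiction hypothesis.
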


\begin{proof}
We use $J_5$ in Figure~\ref{fig:jellyfish.proof.rooting} but the proof is the same for any $k$. 
Let $\hat{J}_k$ be any orientation of $J_k$. 
By Proposition~\ref{betti.number},  $\hat{J}_k$ has exactly four reticulations. 
Let $X$ and  $X^\prime$ be as in Lemma~\ref{jellyfish:at.most.one.ret}. 
Lemma~\ref{jellyfish:at.most.one.ret} allows us to focus on the following two cases. Case 1: When $X^\prime$ contains exactly one reticulation as in Figure~\ref{fig:jellyfish.proof.reticulation1}. Case 2:  When $X^\prime$ contains no reticulation as in Figure~\ref{fig:jellyfish.proof.reticulation0}. Taking the symmetry of the jellyfish graph into account, we see that all the possibilities are listed in these figures. Recall that we cannot insert the root $\rho$ in an edge whose ends are both reticulations (Lemma~\ref{root.children}). As can be verified easily, regardless of the choice of the root edge $e_\rho$, each option ends up with having a forbidden configuration of tree-child networks (i.e. adjacent reticulations or a tree vertex with two reticulation children). Thus, the jellyfish graph is orientable but not tree-child orientable. This completes the proof. 
\end{proof}

\begin{figure}[hbt]
  \includegraphics[width=0.6\textwidth]{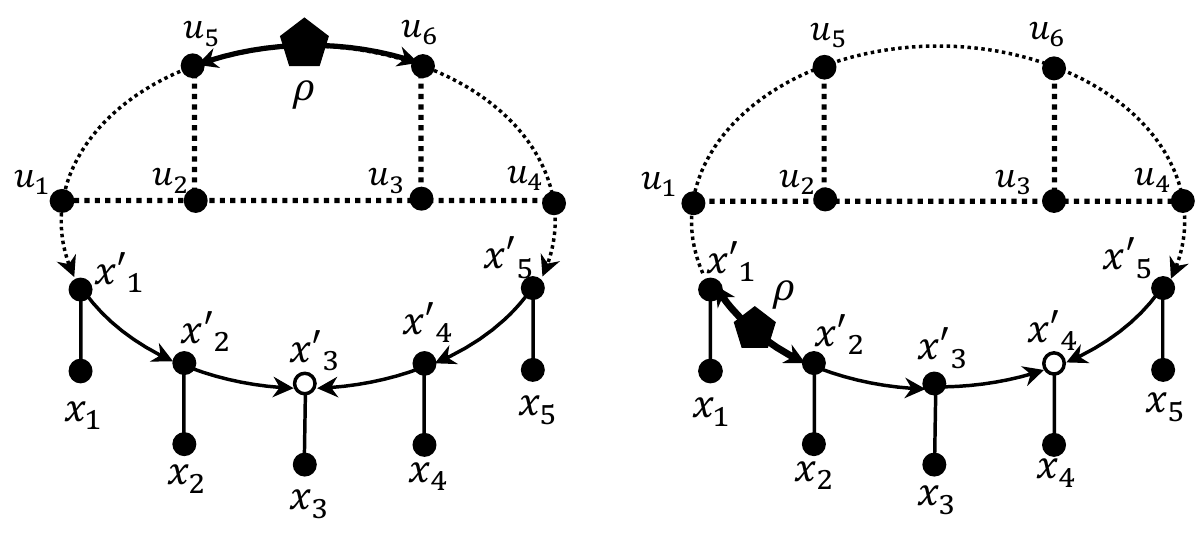}
  \centering
  \caption{Proof of Lemma~\ref{jellyfish:at.most.one.ret}. White vertices indicate reticulations. 
  \label{fig:jellyfish.proof.rooting}}
\end{figure}

\begin{figure}[hbt]
  \includegraphics[width=\textwidth]{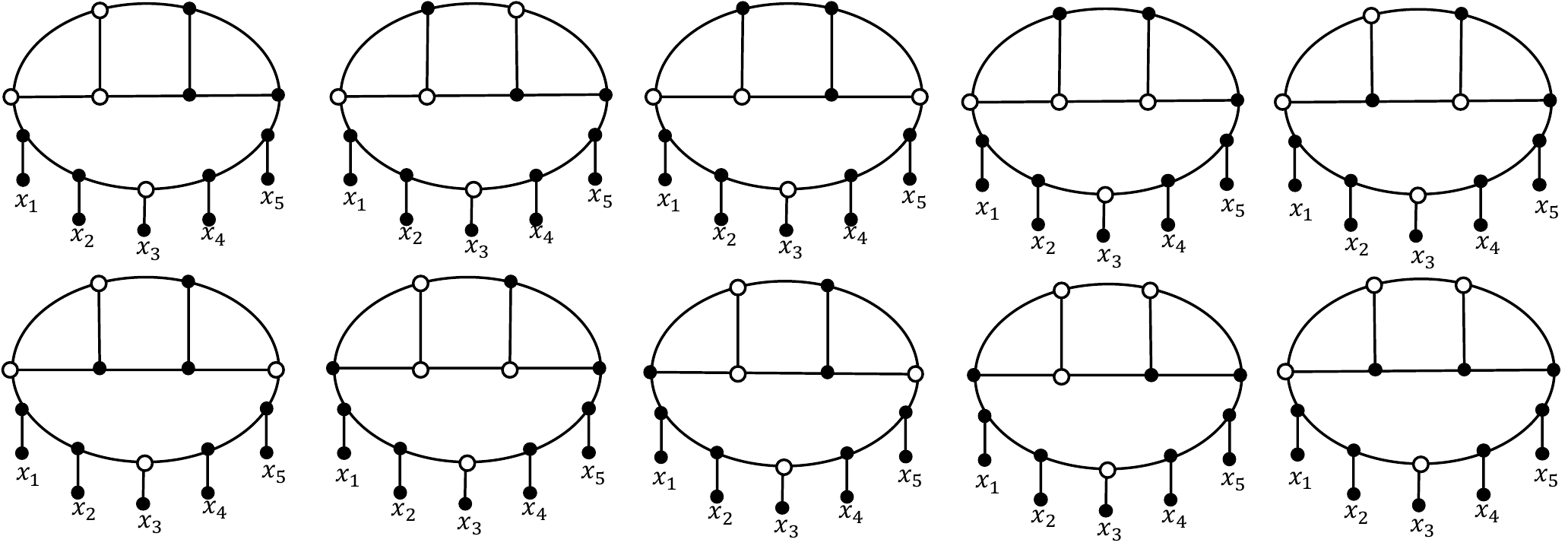}
  \centering
  \caption{Proof of Proposition~\ref{jellyfish.not.tc.orientable} (Case 1). 
  \label{fig:jellyfish.proof.reticulation1}}
\end{figure}

\begin{figure}[hbt]
  \includegraphics[width=0.8\textwidth]{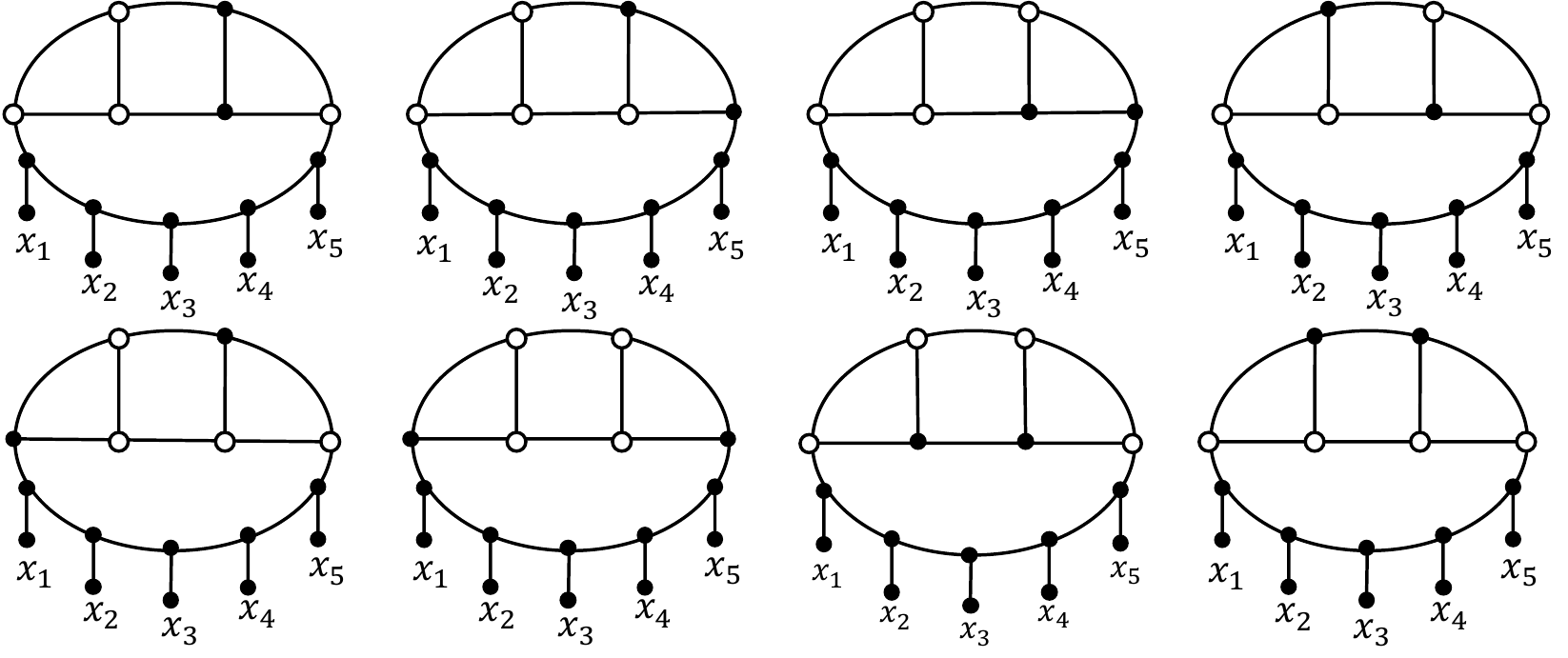}
  \centering
  \caption{Proof of Proposition~\ref{jellyfish.not.tc.orientable} (Case 2). 
  \label{fig:jellyfish.proof.reticulation0}}
\end{figure}

\subsection{The ladder graph}
\emph{A ladder graph} $L_{k}$ is the undirected binary phylogenetic network on 
$X = \{a,b,c,d\}$ with vertices~$\{t_i,b_i: i\in[k+1] = \{1,2,\ldots, k+1\}\}$ 
with paths~$a\, t_1 \ldots t_{k+1}\, c$ and $b\, b_1\, \ldots b_{k+1}\, d$, 
together with edges $\{(t_i, b_i): i\in [k]\}$ (see 
Figure~\ref{fig:ladder.convert} (a) for a ladder graph~$L_5$).
Observe that $k= |E|-|V|+1$.
It is easy to check that  $L_k$ (with 4 leaves) is tree-child orientable if and 
only if $k\leq 3$. However, as shown in Figure~\ref{fig:ladder.convert} (c)(d), 
it is possible to make  $L_k$ with $k\geq 4$ tree-child orientable by adding 
extra leaves at appropriate places. 
In this section, we will show that any $L_k$ with $k\geq 4$ can be converted into a tree-child orientable ladder graph by adding extra leaves at appropriate places. 

%

\begin{figure}[hbt]
\includegraphics[width=0.9\textwidth]{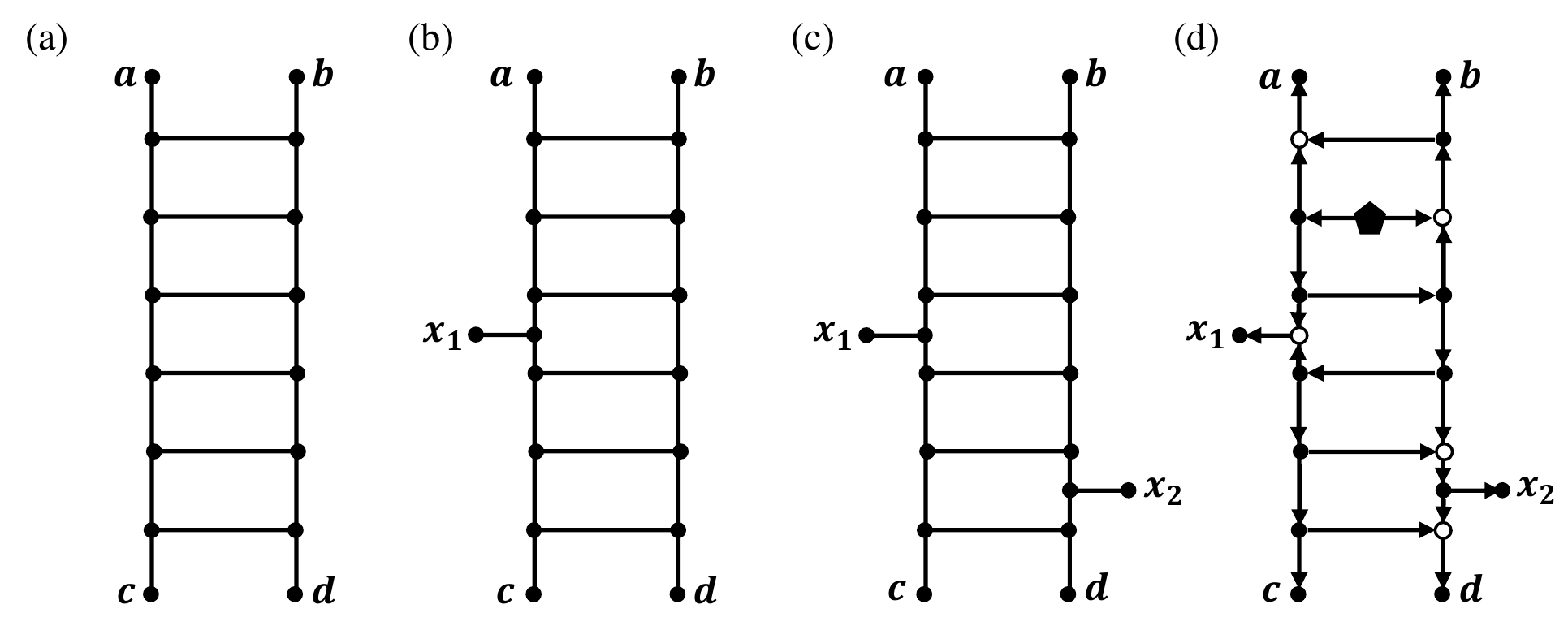}
  \centering
  \caption{(a) A ladder graph $L_5$ with five cycles, which is not tree-child orientable by Theorem~\ref{edges.leq.5X-6}. (b) A ladder obtained from $L_5$ by adding one extra leaf $x_1$, which is still not tree-child orientable. (c)  A ladder obtained by adding two extra leaves $x_1$ and $x_2$, which is tree-child orientable as shown in (d).  
  \label{fig:ladder.convert}}
\end{figure}

\begin{theorem}\label{thm:ladder.algm}
	Let $L_k$ be a ladder graph on leaves $\{a,b,c,d\}$ as described before. 
	Then, one can construct a tree-child orientable network $N$ from $L_k$ by 
	adding exactly $k-3$ leaves, but not by adding $k-4$ or fewer leaves.
\end{theorem}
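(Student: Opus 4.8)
My plan is to prove the two halves of the statement separately: the lower bound (adding $k-4$ or fewer leaves never suffices) by a counting argument, and the matching upper bound (adding $k-3$ leaves suffices) by an explicit construction and orientation. For the lower bound, note first that adding a leaf to a binary phylogenetic network, so as to keep it binary, amounts to subdividing an edge and attaching a pendant edge at the new vertex, and each such step adds one vertex and one edge; hence the quantity $|E|-|V|+1$ is unchanged, and it equals $k$ for $L_k$, as observed above. So if $N'$ is obtained from $L_k$ by adding $j$ leaves, then $N'$ is a binary phylogenetic network on $4+j$ leaves with $|E(N')|-|V(N')|+1=k$, and by Proposition~\ref{betti.number} every orientation of $N'$ has exactly $k$ reticulations. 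If $N'$ were tree-child orientable, then applying Proposition~\ref{leaves-reticulations} to a tree-child orientation would give $k\le (4+j)-1$, i.e.\ $j\ge k-3$. Thus adding $k-4$ or fewer leaves cannot make $L_k$ tree-child orientable.

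For the upper bound I would exhibit, for each $k\ge 4$, a choice of $k-3$ edges of $L_k$ to subdivide-and-append-a-leaf to, together with a root edge and an explicit orientation of every remaining edge, yielding a directed graph $\hat N$, and then verify that $\hat N$ is a tree-child network. The placement of the new leaves and the orientation would be given by a rule regular in $k$ (generalising the pattern drawn in Figure~\ref{fig:ladder.convert}(c)(d) for $k=5$), and the verification splits into checking: (i) the in-/out-degree conditions and uniqueness of the root; (ii) acyclicity; and (iii) the tree-child property, namely that every non-leaf vertex has a child that is a tree vertex or a leaf. Since adding $k-3$ leaves gives $|X|=k+1$ while $\hat N$ still has $k$ reticulations, the orientation is necessarily extremal for Proposition~\ref{leaves-reticulations} ($r=|X|-1$): every tree vertex and the root has exactly one reticulation child, and (being tree-child) no reticulation has a reticulation child. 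The whole point of where the $k-3$ new leaves go is to supply, to each interior tree vertex that is the parent of a reticulation, the second, tree-vertex-or-leaf child that the tree-child condition demands; the four original leaves $a,b,c,d$ play the analogous role at the two ends of the ladder. An algorithmic description of this placement and orientation is what the abstract refers to as ``a simple algorithm''.

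The easy part is the lower bound; the substantive difficulty is engineering the orientation so that precisely $k-3$ extra leaves are enough. A naive orientation of a ladder --- for instance orienting each rail consistently toward one end with all rungs pointing the same way --- creates a run of $\Theta(k)$ pairwise-adjacent reticulations along one rail, which on its own would require on the order of $k$ new leaves to repair; the main obstacle is to find the orientation that instead uses the four given leaves efficiently at the two ends so that only $k-3$ interior repairs remain, and to check that this pattern behaves correctly for all $k$ and not just in the pictured case. I would therefore treat the ``generic'' middle of the ladder, its two ends, and the small case $k=4$ with some care, since it is exactly at the transition between the middle and the ends that a carelessly chosen pattern fails the tree-child condition.
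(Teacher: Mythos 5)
Your lower bound is correct and is in substance the same counting as the paper's: the paper invokes Theorem~\ref{edges.leq.5X-6} ($|E|\leq 5|X|-6$), which itself is derived from $r\leq|X|-1$ (Proposition~\ref{leaves-reticulations}) together with the edge/reticulation relation; your route via the invariance of $|E|-|V|+1$ under leaf attachment and Proposition~\ref{betti.number} is equivalent and, if anything, slightly more transparent. Your structural observations about the upper bound are also sound: since the construction must land exactly on $r=|X|-1$, every tree vertex and the root must have exactly one reticulation child, and the role of the new leaves is precisely to supply the required tree-vertex-or-leaf child to interior parents of reticulations.

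However, the upper bound as written is a plan, not a proof, and this is where the actual content of the theorem lives. You say you ``would exhibit'' a placement of the $k-3$ leaves, a root edge, and an orientation ``regular in $k$,'' and you correctly identify the pitfall (a naive orientation produces $\Theta(k)$ pairwise-adjacent reticulations along a rail), but you never produce the rule or verify it. The paper's proof consists essentially of this explicit data: root edge $e_\rho=(t_{k-2},b_{k-2})$; leaf $x_1$ on $(b_1,b_2)$; leaves alternating between the top rail (edges $(t_{2j+1},t_{2j+2})$) and the bottom rail (edges $(b_{2j+2},b_{2j+3})$) through the middle; a final leaf near the far end whose rail depends on the parity of $k$; and an explicit list of $k$ vertices designated as reticulations ($b_1,b_2$, most of the leaf-neighbours $p_{x_i}$, and three vertices near the ends chosen by parity). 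Tree-childness is then checked by observing that in the resulting (unique, by Theorem~\ref{thm:huber1-2}) orientation no two reticulations are adjacent and no two reticulations share a parent. Without specifying such a pattern and carrying out that check --- including the end-of-ladder and small-$k$ cases you yourself flag as the delicate ones --- the ``exactly $k-3$ suffices'' half of the statement is not established.
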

\begin{proof}
We describe an algorithm for constructing a tree-child orientable network $N$ 
by adding exactly $k-3$ extra leaves to $N$. See Figure~\ref{fig:ladder.algm} 
for the construction.

	Set the root edge $e_\rho := (t_{k-2}, b_{k-2})$. The idea is to place 
	$k-4$ leaves to one side of $e_\rho$ and $1$ leaf to the other side of 
	$e_\rho$. As for reticulations, we place $k-3$ reticulations to one side, 
	$2$ reticulations to the other side, and one on $e_\rho$ itself (i.e. 
	exactly one of $t_{k-2}$ and $b_{k-2}$ will be a reticulation in the 
	orientation).
	Add leaf $x_1$ to edge $(b_1,b_2)$. 
	Add leaf $x_{2j}$ to edge $(t_{2j+1}, t_{2j+2})$ for $j = 1, 2, \ldots, \lceil\frac{k-3}{2}-1\rceil$.
Add leaf $x_{2j+1}$ to edge $(b_{2j+2}, b_{2j+3})$ for $j = 1, 2, \ldots, \lfloor\frac{k-3}{2} - 1\rfloor$.	
	If $k$ is odd, add a leaf~$x_{k-3}$ to edge~$(b_k, b_{k+1})$ and if~$k$ is even, add a leaf~$x_{k-3}$ to edge~$(t_k, t_{k+1})$. 
	In total, we have added $k-3$ leaves to $L_k$. 
	To show the resulting graph $N$ is tree-child orientable, we now specify $k$ vertices to be reticulations, and obtain a tree-child orientation $\hat{N}$. 
	Let~$p_{x_i}$ denote the neighbour of leaf~$x_i$ for each $i\in [k-3]$.
	 We let $p_{x_1}$ be a tree vertex and let~$p_{x_i}$ be a reticulation for $i = 2,\ldots, k-4$. 
	We let~$b_1, b_2$ be reticulations. If~$k$ is odd, let~$t_{k-2}, b_k, 
	b_{k+1}$ 
	be reticulations; if 
	$k$ is even, let~$b_{k-2}, t_k, t_{k+1}$ be reticulations.
This allocation of the root and reticulations yields an orientation $\hat{N}$ 
of the ladder graph $N$ (and $\hat{N}$ is unique for this allocation by 
Theorem~\ref{thm:huber1-2}). To see that $\hat{N}$ is a tree-child network, 
observe that no two reticulations are adjacent and no two reticulations at 
distance $2$ have a common parent in $\hat{N}$. Theorem~\ref{edges.leq.5X-6} 
implies that at least $k-3$ additional leaves are necessary to obtain a 
tree-child orientable network. This completes the proof.
\end{proof}

\begin{figure}[hbt]
\includegraphics[width=0.9\textwidth]{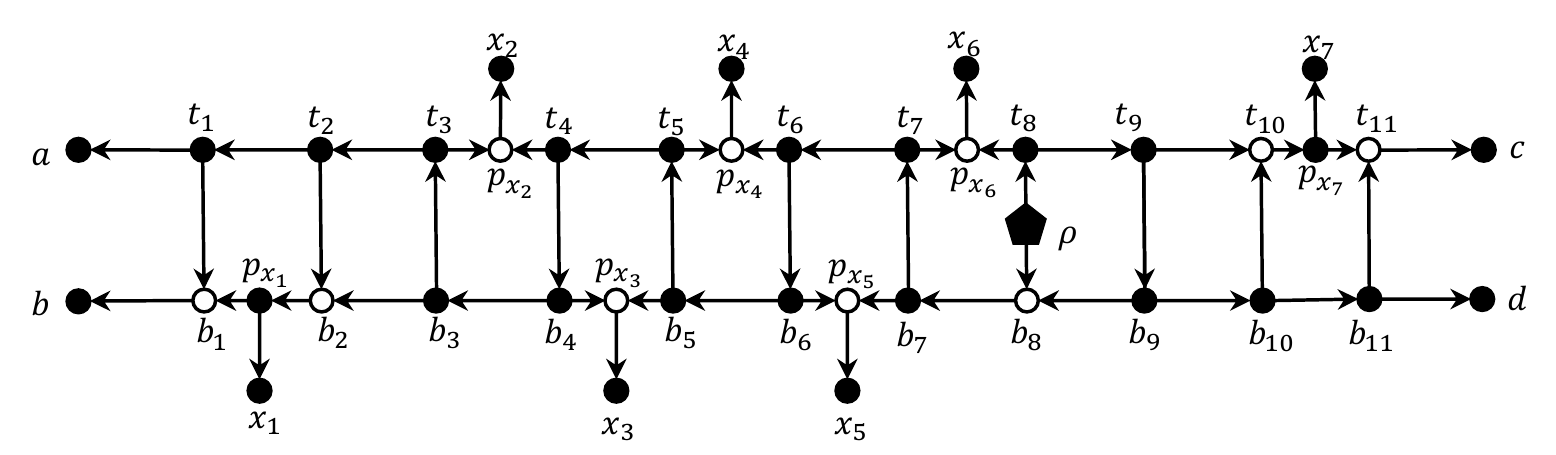}
  \centering
  \caption{Proof of Theorem~\ref{thm:ladder.algm}. An illustration for the case of $k=10$. The open circles represent the vertices specified as reticulations.
  \label{fig:ladder.algm}}
\end{figure}

We note that, unlike the case shown in Figure~\ref{fig:ladder.convert} (d), the ladder graph in Figure~\ref{fig:L5-not.TC.orientable} is not tree-child orientable although it satisfies the necessary conditions for the tree-child orientability described in Theorem~\ref{edges.leq.5X-6} and in  Proposition~\ref{2or3}. 

\begin{figure}[hbt]
\centering
\includegraphics[width=0.3\textwidth]{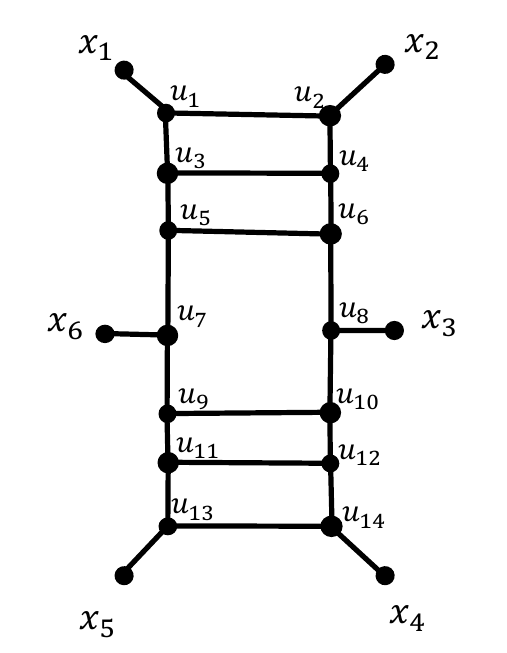}
  \caption{An example of a ladder graph with additional leaves that is not tree-child orientable. If the position of the leaf $x_3$ is changed from the edge $(u_6, u_{10})$ to $(u_{12}, u_{14})$, it becomes tree-child orientable (as shown in Figure~\ref{fig:ladder.convert} (d)). 
  \label{fig:L5-not.TC.orientable}
  }
\end{figure}

\section{Conclusion and open problems}\label{sec:Discussion}
In this paper, we have discussed the Problem~\ref{prob:tree-child-orientation} (\textsc{Tree-child Orientation}), which asks, given an undirected phylogenetic network $N$, whether it is possible to orient $N$ to a tree-child network $\hat{N}$. To the best of our knowledge, this is the first study to focus on the tree-child orientation problem. While a characterization of tree-child orientable graphs is remains open, we have obtained several necessary conditions for the tree-child orientability.  In particular, Theorem~\ref{edges.leq.5X-6} provides a tight upper bound on the size of tree-child orientable graphs. In addition, based on the recent work of  Huber~\textit{et al.}~\cite{orienting} on Problem~\ref{prob:constrained.orientation} (\textsc{Constrained Orientation}), we introduced new families of undirected phylogenetic networks, the jellyfish graphs and ladder graphs, which are orientable but not tree-child orientable. We also proved that any ladder graph can be made tree-child orientable by adding extra leaves, and described an algorithm for converting a ladder graph into a tree-child network with the minimum number of extra leaves.

While we conjecture that \textsc{Tree-child Orientation} is NP-complete, there are many other interesting directions for future research. 
What is a necessary and sufficient condition for tree-child orientable networks? This question is still  interesting even if we restrict our attention to planar graphs, since a tree-child network is not necessarily planar (see the network in Figure~\ref{fig:non-planar.tree-child}). So which planar graphs are tree-child orientable? It is easy to see that undirected non-binary phylogenetic cactuses (defined in \cite{hayamizu2020recognizing}) are  tree-child orientable, but what about a more general subclass of planar graphs (e.g. outer-labeled planar graphs that are constructed by the Neighbor-Net algorithm)?

%


\begin{figure}[hbt]
  \includegraphics[width=0.4\textwidth]{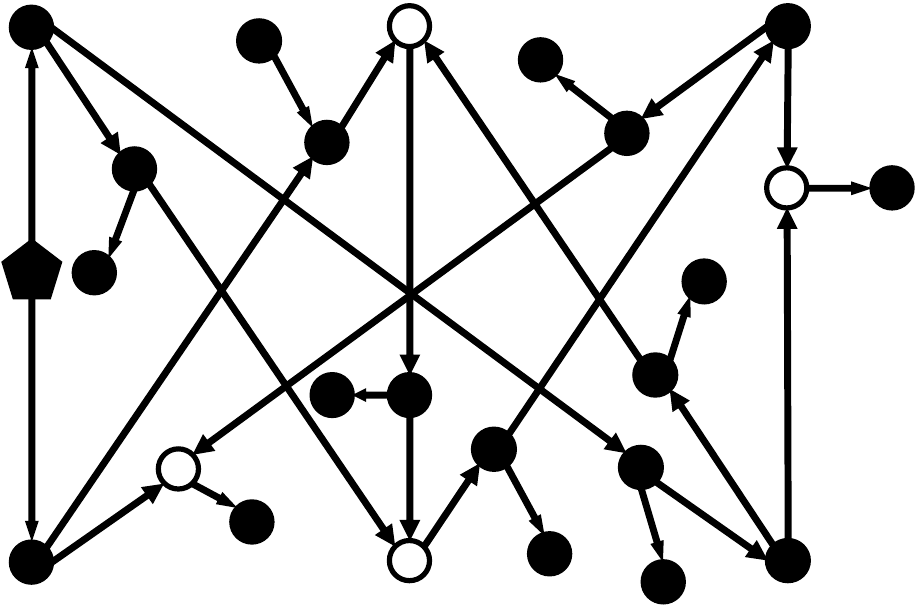}
  \centering
  \caption{An example of a non-planar tree-child phylogenetic network. Indeed, the graph is a directed binary tree-child network (i.e. each non-leaf vertex  has at least one child that is a tree vertex) and also contains a subgraph that is homeomorphic to $K_{3,3}$. By Kuratowski's Theorem~\cite{kuratowski1930probleme}, the graph is not planar. 
  \label{fig:non-planar.tree-child}}
\end{figure}

\bibliographystyle{plain}
\bibliography{lipics-v2019-sample-article.bib}

\end{document}